\title{Random Simplicial Complexes - Around the Phase Transition}
\author{Nathan Linial\thanks{Department of Computer Science, Hebrew University, Jerusalem 91904,
    Israel. e-mail: nati@cs.huji.ac.il~. Supported by ERC grant 339096 "High-dimensional combinatorics".} \and  {Yuval Peled\thanks{Department of Computer Science, Hebrew University, Jerusalem 91904,
    Israel. e-mail: yuvalp@cs.huji.ac.il~. Yuval Peled is grateful to the Azrieli Foundation for the award of an Azrieli Fellowship.}
}}
\date{In memory of Ji\v{r}\'{i} Matou\v{s}ek}
\newcommand{\ignore}[1]{}
\newcommand{\mR}{\ensuremath{\mathbb R}}
\newcommand{\E}{\ensuremath{\mathbb E}}
\newcommand{\C}{\ensuremath{\mathbb C}}
\newcommand{\tl}[1]{\tilde{{#1}}}
\newcommand{\I}{\ensuremath{[0,1]}}
\newcommand{\inpr}[1]{\ensuremath{\left\langle #1 \right\rangle}}
\newcommand{\LMvarc}[1]{\ensuremath{Y_d\left(n,\frac {#1}{n}\right)}}
\def \B {\mathcal B}
\def \A {\mathcal A}
\def \D {\mathcal D}
\def \cL {\mathcal L}
\def \R {\mathcal R}
\def \P {\mathcal P}
\def \M {\mathcal M}
\newtheorem{theorem}{Theorem}[section]
\newtheorem{lemma}[theorem]{Lemma}
\newtheorem{claim}[theorem]{Claim}
\def \dim {{\rm dim}}
\def \ker {{\rm ker}}
\def \LMc {Y_d\left(n,\frac cn\right)}
\def \ccol {\gamma_d}
\def \cacy {c_d}
\def \SH {\mbox{SH}}
\begin{document}
\maketitle

\section*{Abstract}

This article surveys some of the work done in recent years on random simplicial complexes. We mostly consider higher-dimensional analogs of the well known phase transition in $G(n,p)$ theory that occurs at $p=\frac 1n$. Our main objective is to provide a more streamlined and unified perspective of some of the papers in this area.

\section{Introduction}
There are at least two different perspectives from which our subject can be viewed. We survey some recent developments in the emerging field of {\em high-dimensional combinatorics}. However, these results can be viewed as well as part of an ongoing effort to apply {\em the probabilistic method in topology}. The systematic study of random graphs was started by Erd\H{o}s and  R\'enyi in the early 1960's and had a major impact on discrete mathematics, computer science and engineering. Since graphs are one-dimensional simplicial complexes, why not develop an analogous theory of $d$-dimensional random simplicial complexes for all $d\ge 1$? To this end, an analog of Erd\H{o}s and R\'enyi's $G(n,p)$ model, called $Y_d(n,p)$, was introduced in \cite{LM}. Such a simplicial complex $Y$ is $d$-dimensional, it has $n$ vertices and has a full $(d-1)$-dimensional skeleton. Each $d$-face is placed in $Y$ independently with probability $p$. Note that $Y_1(n,p)$ is identical with $G(n,p)$.

One of the most natural questions to ask in any model of random graphs concerns graph {\em connectivity}. As Erd\H{o}s and R\'enyi famously showed, the threshold for graph connectivity in $G(n,p)$ is $p=\frac{\ln n}{n}$. To draw the analogy from a topological perspective, one should seek the threshold for the vanishing of the $(d-1)$-st homology. This indeed was the motivating problem in~\cite{LM}. As that paper showed, and together with subsequent work~\cite{MW} this threshold in $Y_d(n,p)$ is $p=\frac{d\ln n}{n}$. Here the coefficients can come from any fixed finite Abelian group. The same question for {\em integral} $(d-1)$-st homology has attracted considerable attention and the answer is believed to be the same. This was recently confirmed for $d=2$ ~\cite{LuP}, and is not yet fully resolved for higher dimensions (but see~\cite{HKP}). The threshold for the vanishing of the fundamental group of $Y_2(n,p)$ is fairly well (but still not perfectly) understood~\cite{BHK,KPS}.

Since we tend to work by analogy with the $G(n,p)$ theory, it is a very challenging problem to seek a high-dimensional counterpart to the {\em phase transition} that occurs at $p=\frac 1n$. It is here that the random graph asymptotically-almost-surely (a.a.s.) acquires cycles. Namely, for every $1>c>0$ there is a $1>q=q(c)>0$ such that a graph in $G(n,\frac cn)$ is a forest with probability $q+o_n(1)$, but for $p\ge \frac 1n$, a $G(n,p)$ graph has, a.a.s., at least one cycle. These notions have natural analogs in higher-dimensional complexes that suggest what is being sought. However, even more famously, a {\em giant} connected component with $\Omega(n)$ vertices emerges at $p=\frac 1n$. Since there is no natural notion of connected components at dimensions $d>1$, it is not even clear what to ask. Finding the correct framework for asking this question and discovering the answer is indeed one of the main accomplishments of the research that we survey here.

Another reason that makes the high-dimensional scenario more complicated than the graph-theoretic picture is that there are several natural analogs for acyclicity. A $(d-1)$-face $\tau$ in a $d$-complex $Y$ is {\it free} if it is contained in exactly one $d$-dimensional face $\sigma$ of $Y$. In the corresponding {\em elementary collapse} step\footnote{Recall that an elementary collapse is a homotopy equivalence.}, $\tau$ and $\sigma$ are removed from $Y$. We say that $Y$ is {\em $d$-collapsible} if it is possible to eliminate all its $d$-faces by a series of elementary collapses. Otherwise, the maximal subcomplex of $Y$ in which all $(d-1)$-faces are contained in at least $2$ $d$-faces is called the {\em core} of $Y$. Clearly a graph (i.e., a $1$-dimensional complex) is $1$-collapsible if and only if it is acyclic, i.e., a forest.

A $d$-complex $Y$ is said to be {\em $d$-acyclic} if its $d$-th homology group vanishes. Namely, if the $d$-dimensional boundary matrix $\partial_d(Y)$ has a trivial right kernel. Unless otherwise stated, we consider this matrix over the reals. The real $d$-Betti number of $Y$ is $\beta_d(Y;\mR):=\dim H_d(Y;\mR) = \dim( \ker\partial_d(Y))$. 

Whereas acyclicity and $1$-collapsibility are equivalent for graphs, this is no longer the case for $d$-dimensional complexes\footnote{We briefly refer to a $d$-dimensional complexes as a $d$-complex}. Clearly, a $d$-collapsible simplicial complex has a trivial $d$-th homology, but the reverse implication does not hold in dimension $d\ge 2$.

In this view, there are now two potentially separate thresholds to determine in $Y_d(n,p)$: For $d$-collapsibility and for the vanishing of the $d$-th homology. These questions were answered and the respective thresholds were determined in a series of four papers. A lower bound on the threshold for $d$-collapsibility was found in~\cite{ALLM} and a matching upper bound was proved in~\cite{col2}\footnote{In the present article we note an error in that paper, but also indicate how to overcome it. In doing so we also derive some additional information on critical complexes.} . An upper bound on the threshold for the vanishing of the $d$-th homology was found in~\cite{acyc}, with a recent matching lower bound for real homology~\cite{LP}. We conjecture that the same bound holds for all coefficient rings, but this remains open at present.

The purpose of this paper is to survey these results and present the main ingredients of the proofs. In particular, we highlight the key role of the local structure of random complexes in all these proofs.

Both thresholds are of the form $p=\frac cn$. Namely there is a constant $c=\ccol$ corresponding to the $d$-collapsibility threshold and $c=\cacy$ for acyclicity. As functions of the dimension $d$, the constants $\ccol$ and $\cacy$ differ substantially. Our results allow us to numerically compute them to desirable accuracy (See Table \ref{tbl:thrVals}).

\begin{table}
\begin{center}
\begin{tabular}{| l | c  | c | c | c |c|c|c| }
  \hline
  $d$ & \bf{2} & \textbf{3} & \textbf{4} & \textbf{5} & \textbf{10}&\textbf{100}&\textbf{1000} \\
  \hline
  $\ccol$ &
$2.455$ &
$3.089$ &
$3.509$ &
$3.822$ &
$4.749$ &
$7.555$ &
$10.175$ \\
  \hline
  $\cacy$ &
$2.754$&
$3.907$&
$4.962$&
$5.984$&
$11-10^{-3.73}$&
$101-10^{-41.8}$&
$1001-10^{-431.7}$\\
  \hline
\end{tabular}

\caption{The critical constants $\ccol$ and $\cacy$.}\label{tbl:thrVals}
\end{center}
\end{table}

Before stating the theorems, a small technical remark is in order. An obvious obstacle for a $d$-complex $Y$ to be either $d$-collapsible or $d$-acyclic is that it contains the boundary of a $(d+1)$-simplex $\partial\Delta_{d+1}$, i.e. all the $d+2$ $d$-faces that are spanned by some $d+2$ vertices. In the random complex $\LMc$, these objects appear with probability bounded away from both zero and one, and it is easy to see that their number is Poisson distributed with a constant expectation. In particular, $\LMc$ is $\partial\Delta_{d+1}$-free with positive probability. There are several ways to go around this technical difficulty. In \cite{ALLM} a model of random complexes conditioned on being $\partial\Delta_{d+1}$-free was considered, which allowed a cleaner form for the theorems. Here we work with the simple binomial model, and consequently must mention these simplices.

We turn to the main theorems. Let $d\ge 2$ be an integer, $c>0$ real and denote the core of $Y=\LMc$ by $\tilde Y$. We define $\ccol$ as the minimum of the function $\psi(x):=-\frac{\ln{x}}{(1-x)^d}~,~~0<x<1$. Furthermore, we let $x_*$ be the unique root in $(0,1)$ of
$
(d+1)(1-x)+(1+dx)\ln x=0,
$
and $\cacy:=\psi(x_*)$.

In addition, for an integer $k$ and $\lambda>0$ real, we let $\Psi_k(\lambda):=\Pr[\mbox{Poi}(\lambda)\ge k]$, and $t=t(c,d)$ be the smallest positive root of $t=e^{-c(1-t)^d}$, or equivalently $1-t = \Psi_1(c(1-t)^d)$.

\begin{theorem}
\label{thm:main}
Let $d\ge 2$ be an integer, $c>0$ real, and $Y=\LMc$.
\begin{enumerate}[(I)]
\item The collapsible regime: If $c<\ccol$ then a.a.s.\ either $Y$ is $d$-collapsible or its core is comprised of $O_d(1)$ vertex disjoint $\partial\Delta_{d+1}$'s.
\item The intermediate regime: If $\ccol<c<\cacy$ then a.a.s.\ 

\begin{enumerate}
 \item $Y$ is not $d$-collapsible. Moreover, its core contains a constant fraction of the $(d-1)$-faces:
\begin{equation}
\label{eqn:fcnms}
f_{d-1}(\tilde Y) = \Psi_2(c(1-t)^d)\binom nd(1+o(1))~~~~,~~~~
f_d(\tilde Y) = \frac cn \binom n{d+1}(1-t)^{d+1}(1+o(1)).
\end{equation}
In particular, $f_{d}(\tilde Y)<f_{d-1}(\tilde Y)$.
\item Either $Y$ is $d$-acyclic or $H_d(Y;\mR)$ is generated by $O_d(1)$ vertex disjoint $\partial\Delta_{d+1}$'s.
\end{enumerate}
\item The cyclic regime: If $c>\cacy$ then a.a.s.\ $H_d(Y;\mR)$ is non-trivial. Furthermore, $f_{d-1}(\tilde Y)$ and $f_{d}(\tilde Y)$ still satisfy equation (\ref{eqn:fcnms}), but in this regime $f_{d-1}(\tilde Y)<f_{d}(\tilde Y)$ and
\[
\beta_d(Y) =  \left(\frac{c}{d+1}(1-t)^{d+1}-(1-t)+ct(1-t)^d \right){n \choose d}(1+o(1))=\big(f_{d}(\tilde Y) - f_{d-1}(\tilde Y)\big)(1+o(1)).
\]
\end{enumerate}
\end{theorem}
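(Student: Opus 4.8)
The plan is to analyze the core $\tilde Y$ of $Y=\LMc$ via the local-structure / branching-process heuristic that underlies all three regimes, and then deduce the Betti-number formula from the rank--nullity identity for the boundary map $\partial_d$ restricted to $\tilde Y$. The starting point is the variable $t=t(c,d)$, the smallest positive root of $t=e^{-c(1-t)^d}$. In the regime $c>\cacy$ this root is the one that governs the collapse process: running the greedy collapse on $Y$, a $(d-1)$-face survives to the core essentially iff it is ``covered at least twice'' by surviving $d$-faces, which by a Poisson local-limit computation on the $(d+1)$-uniform ``hypertree'' neighborhood happens with probability $\Psi_2(c(1-t)^d)$, and a $d$-face $\sigma=\{v_0,\dots,v_d\}$ survives iff all $d+1$ of its $(d-1)$-subfaces are ``reachable,'' which happens with probability $(1-t)^{d+1}$. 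This is exactly the content of \eqref{eqn:fcnms}, and I would first establish those two face counts rigorously: the upper bounds by first-moment/union-bound arguments on the collapse, the lower bounds (concentration) by a second-moment computation or by the Warnke-type local-weak-convergence machinery already used in \cite{ALLM,col2}. The key structural point — that the collapse ``localizes,'' so that whether a face is in the core depends a.a.s.\ only on a bounded-radius neighborhood — is where most of the work sits, but it is the same localization principle flagged in the introduction and may be quoted from the earlier papers.

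Given the face counts, the Betti number is obtained from
\[
\beta_d(Y;\mR)=\dim\ker\partial_d(Y)=f_d(\tilde Y)-\rank\partial_d(\tilde Y),
\]
using that collapsing does not change homology, so $\beta_d(Y;\mR)=\beta_d(\tilde Y;\mR)$, and that $\ker\partial_d$ is supported on the core. Thus the whole problem reduces to computing $\rank\partial_d(\tilde Y)$. I would argue that a.a.s.
\[
\rank\partial_d(\tilde Y)=f_{d-1}(\tilde Y)(1+o(1)),
\]
i.e.\ that $\partial_d$ restricted to the core is of \emph{full row rank} up to lower-order terms — equivalently, that $\beta_{d-1}(\tilde Y;\mR)=o\!\left(\binom nd\right)$ and that the cokernel is negligible. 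Granting this,
\[
\beta_d(Y;\mR)=f_d(\tilde Y)-f_{d-1}(\tilde Y)+o\!\left(\tbinom nd\right),
\]
and plugging in \eqref{eqn:fcnms} together with the identity $1-t=\Psi_1(c(1-t)^d)$ gives, after simplification, the stated closed form
$\big(\tfrac{c}{d+1}(1-t)^{d+1}-(1-t)+ct(1-t)^d\big)\binom nd(1+o(1))$; the sign statement $f_{d-1}(\tilde Y)<f_d(\tilde Y)$ in this regime is a consequence of $c>\cacy=\psi(x_*)$, i.e.\ of comparing $\Psi_2(c(1-t)^d)$ with $\tfrac{c}{d+1}(1-t)^{d+1}$ at the relevant value of $t$, which reduces to checking the sign of $(d+1)(1-x)+(1+dx)\ln x$ at $x=(1-t)^{?}$ — a one-variable calculus exercise I would isolate as a short lemma.

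The main obstacle, and the heart of the matter, is the claim that $\partial_d(\tilde Y)$ has asymptotically full row rank, i.e.\ that the ``defect'' $f_{d-1}(\tilde Y)-\rank\partial_d(\tilde Y)=\beta_{d-1}(\tilde Y;\mR)$ is $o(\binom nd)$. Over a finite field this kind of statement can sometimes be reached by an Euler-characteristic plus expansion argument, but over $\mR$ one typically needs to exhibit a large acyclic ``skeleton'' inside the core or to control the $(d-1)$-st homology directly. I would attack it by showing that the core contains, a.a.s., a spanning $d$-complex obtained from a maximal collection of $d$-faces whose boundaries are linearly independent — essentially a matroidal/greedy argument — whose size is $f_{d-1}(\tilde Y)(1+o(1))$; the independence is controlled because a minimal dependency (a real $d$-cycle) of small support would have to be locally tree-like, hence a $\partial\Delta_{d+1}$ or impossible, while large-support cycles are ruled out by an isoperimetric/expansion estimate on the core (every small set of $d$-faces has many free $(d-1)$-faces on its boundary, since $c(1-t)^d$ is large enough in this regime that the local branching process is supercritical with the right offspring mean). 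Equivalently, one can phrase this as: the only obstruction to full rank comes from the $O_d(1)$ vertex-disjoint $\partial\Delta_{d+1}$'s, which is precisely what part (II)(b) asserts at the boundary of this regime and what the $O_d(1)$-generation statements throughout the theorem reflect. Once this expansion/independence estimate is in place, the rest is bookkeeping with the Poisson functionals $\Psi_1,\Psi_2$ and the defining equation of $t$.
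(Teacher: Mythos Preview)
Your proposal correctly identifies the overall architecture for part~(III): establish the face counts~\eqref{eqn:fcnms} for the core via the local Poisson $d$-tree limit, and combine with rank--nullity. The inequality $\beta_d(Y)\ge f_d(\tilde Y)-f_{d-1}(\tilde Y)$ is indeed the easy direction and essentially matches the paper's argument in Section~5.2 (the paper works with $S_k(Y)$ rather than the full core $\tilde Y$, but the idea is the same). The matching \emph{upper} bound on $\beta_d$ --- equivalently your claim that $\partial_d(\tilde Y)$ has asymptotically full row rank --- is where the proposal has a genuine gap, and the paper takes an entirely different route.

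Your proposed attack on the full-rank claim does not work as stated. The ``isoperimetric/expansion'' heuristic you invoke (small sets of $d$-faces have many free $(d-1)$-faces on their boundary) cannot be applied inside the core, where by definition \emph{no} $(d-1)$-face is free; and more fundamentally, controlling the \emph{supports} of $d$-cycles says nothing about the \emph{dimension} of the cycle space, which is what you must bound. The ``large acyclic skeleton'' idea is just a restatement of the goal: exhibiting $f_{d-1}(\tilde Y)(1-o(1))$ linearly independent columns \emph{is} the desired rank lower bound, and no mechanism for producing them is offered. The paper instead bounds the left kernel of $\partial_d(Y)$ (for the full $Y$, not the core) via the spectral measure of the Laplacian $L(Y)=\partial_d\partial_d^*$ at a unit vector $e_\tau$: local weak convergence transports this to the atom at~$0$ of the spectral measure of $L(T)$ for the Poisson $d$-tree, a resolvent identity yields a recursion for this atom along the tree (Lemma~\ref{lem:recTree}), and the resulting distributional fixed-point computation (Lemma~\ref{lem::recPoiTree}) produces the upper bound~\eqref{eqn:beta_upper_bound}. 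This functional-analytic step is the substance of both~(II.b) and the upper half of~(III); nothing in your outline substitutes for it, and as the paper itself notes, it is precisely this step that currently has no combinatorial replacement and obstructs extending the result to finite coefficient fields.
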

\begin{figure}[h!]
\label{fig:thm1}
  \centering
     \includegraphics[width=0.7\textwidth]{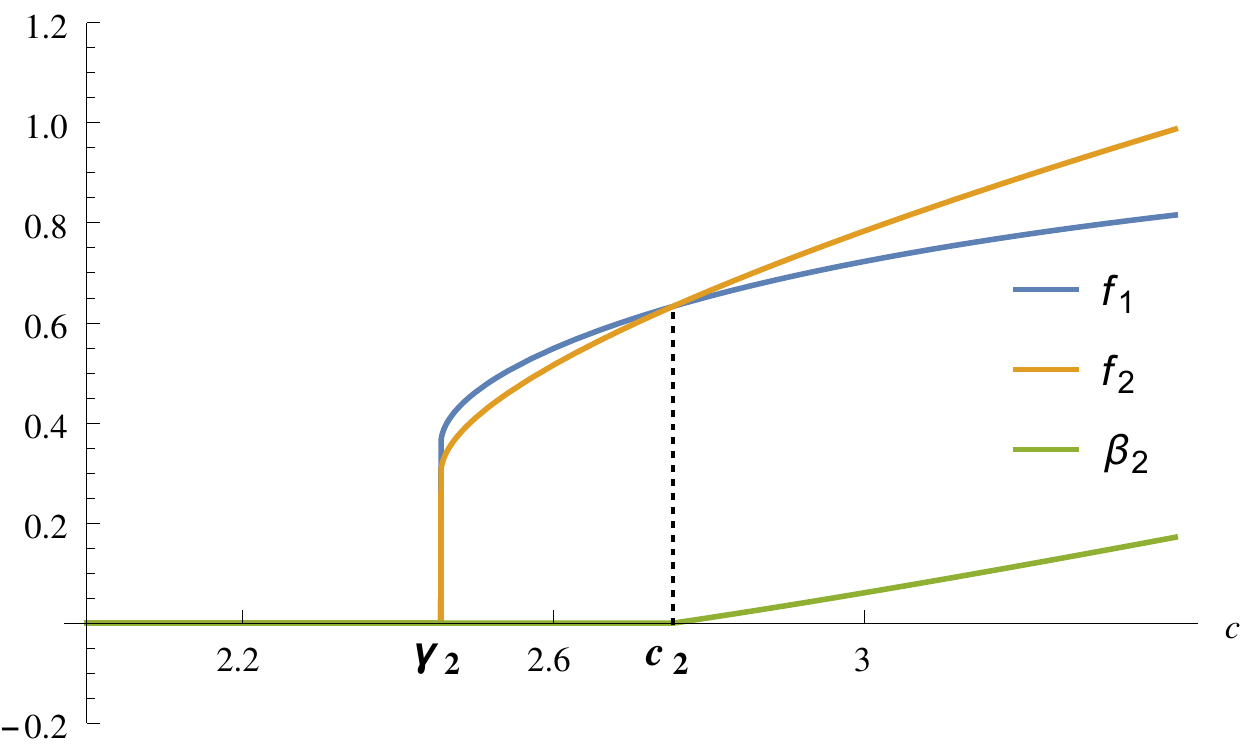}
    \caption{Illustration of Theorem \ref{thm:main} for $d=2$. Here $f_1,f_2$ are the face numbers of $\tilde Y$ and $\beta_2$ is the second Betti number. The functions are normalized by $\binom n2$, and $n\to\infty$.}
       \end{figure}
It is not hard to determine the asymptotic behaviour (in $d$) of these expressions, namely,
\[\cacy=(d+1)(1-e^{-(d+1)})+O_d(d^3e^{-2d})\]
and
\[\ccol=(1+o_d(1))\ln d.\]
Consequently, there is a wide range of the parameter $p=p(d,n)$ for which almost all the complexes in $Y_d(n,p)$ are acyclic and non-collapsible. There are strong indications that complexes from this range have interesting and unexpected properties.

Note that $f_d(\tilde Y) - f_{d-1}(\tilde Y)>0$ implies that $H_d(Y;\mR)\neq 0$. Moreover, this difference of the face numbers is a lower bound for the $d$-th Betti number. Theorem \ref{thm:main} shows that for all $0\le p\le 1$, and up to the appearance of $\partial\Delta_{d+1}$'s these two conditions are typically equivalent and the lower bound is asymptotically tight. This is clearly a probabilistic statement which does not hold in general.

We turn to deal with the emergence of the giant component, a subject on which there exists an extensive body of literature. As mentioned above, there is no obvious high-dimensional counterpart to the notion of connected components, and we need a conceptual idea in order to even get started. The notion of {\em shadows}, introduced in \cite{LNPR}, offers a way around this difficulty. The idea is to tie connected components with cycles, which do have natural high dimensional counterparts. The shadow of a graph $G$ is the set of those edges that are not in $G$, whose addition to $G$ creates a new cycle. It turns out that the giant component emerges exactly when the shadow of the evolving random graph acquires positive density. In particular, for $c>1$ the shadow of $G(n,\frac cn)$ has density $(1-t)^2+o(1)$, where $t$ is the unique root in $(0,1)$ of $t=e^{-c(1-t)}$ (See Figure \ref{fig:shadowGraph}).

This suggests how we should define $\SH_\mR(Y)$, the shadow of $Y$, a $d$-dimensional complex with full $(d-1)$-skeleton. Namely, it is the following set of $d$-faces:
\[
\SH_\mR(Y)=\{\sigma\notin Y:H_d(Y;\mR)\text{~is a proper subspace of~} H_d(Y\cup\{\sigma\};\mR)\}.
\]
In words, a $d$-face belongs to $\SH_\mR(Y)$ if it is not in $Y$ and its addition to $Y$ creates a new $d$-cycle.

We are considering throughout the vanishing of the $d$-th homology and $d$-collapsibility. These two notions capture acyclicity from an algebraic resp. combinatorial perspective. For $d=1$ the two coincide, but they differ widely for $d\ge 2$. This dual perspective carries over to two notions of shadows. A $d$-face $\sigma$ that does not belong to a $d$-complex $Y$ is in $Y$'s $\mR$-shadow if its addition to $Y$ increases the $d$-homology. It is in the {\em C-shadow} of $Y$ if its addition to $Y$ increases the core. Again, these notions coincide for $d=1$, and the $\mR$-shadow is always contained in the C-shadow, but for $d\ge 2$ they may differ.

The notion of shadows lets us compare the phase transitions of random graphs and random complexes of higher dimensions, and a substantial qualitative difference reveals itself. While the density of the shadow of $G(n,p)$ undergoes a smooth transition around $p=1/n$, when $d\geq 2$ both the C-shadow and the $\mR$-shadow of $\LMc$ undergo {\em discontinuous} first-order phase transitions at the critical points  $\ccol$ and $\cacy$ respectively.
\begin{theorem}
\label{thm:randShadow}
Let $d\ge 2$ be an integer, $c>0$ real, and $Y=\LMc$.
\begin{enumerate}[(I)]
\item The collapsible regime: If $c<\ccol$ then a.a.s.\ $|\SH_\mR(Y)| \le |\SH_C(Y)|=\Theta(n).$

\item The intermediate regime: If $\ccol<c<\cacy$ then a.a.s.\
$|\SH_\mR(Y)| = \Theta(n)$, and  $$|\SH_C(Y)| = \binom n{d+1}((1-t)^{d+1}+o(1)).$$

\item The cyclic regime: If $c>\cacy$ then a.a.s.\ the size of both $\SH_\mR(Y)$ and $\SH_C(Y)$ is $\binom n{d+1}((1-t)^{d+1}+o(1)).$
\end{enumerate}
\end{theorem}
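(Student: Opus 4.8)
Since the support of an $\mR$-cycle contains no free $(d-1)$-face, adding to $Y$ a $d$-face that creates a new $d$-cycle always makes that face enter the core; hence $\SH_\mR(Y)\subseteq\SH_C(Y)$, and the proof reduces to: (a) an upper bound on $|\SH_C(Y)|$ valid for every $c$, (b) matching lower bounds on $|\SH_\mR(Y)|$ in the collapsible and cyclic regimes, and (c) a separate determination of both shadows in the intermediate regime. The engine throughout is the local analysis that already underlies Theorem~\ref{thm:main}: $\LMc$ converges, in the local weak sense, to a Poisson branching process on $(d-1)$-faces, and the peeling process that produces the core is governed by the fixed point $t=e^{-c(1-t)^d}$, equivalently $1-t=\Psi_1(\lambda)$ with $\lambda=c(1-t)^d$. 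In this limit a fixed $(d-1)$-face lies in the core with probability $\Psi_2(\lambda)$ — which is exactly $f_{d-1}(\tilde Y)/\binom nd$ — and it is \emph{active}, meaning it would join the core as soon as it is handed one further supported incident $d$-face, with probability $\Psi_1(\lambda)=1-t$.

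For the C-shadow I would first establish the combinatorial reduction that, up to an additive $o(n^{d+1})$ error, a non-face $\sigma$ belongs to $\SH_C(Y)$ precisely when all $d+1$ facets of $\sigma$ are active. One direction is immediate: if every facet is active then in $Y\cup\sigma$ each facet of $\sigma$ has at least two supported incident $d$-faces (the one it already had, together with $\sigma$), so $\sigma$ enters the core. Conversely, a non-active facet can be rescued only when the re-coring cascade triggered by $\sigma$ loops back around $\sigma$, creating a bounded cycle-like subcomplex through it; a first-moment estimate (a minimal $d$-cycle spans no more vertices than $d$-faces, with equality essentially only for $\partial\Delta_{d+1}$) shows such $\sigma$ number $o(n^{d+1})$. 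Asymptotic independence of the $d+1$ facet-neighbourhoods then yields, via a second-moment computation, $|\SH_C(Y)|=(1-t)^{d+1}\binom n{d+1}(1+o(1))$. This is $\Theta(n^{d+1})$ precisely when $c>\ccol$, where the fixed point satisfies $t<1$, and it degenerates to $o(n^{d+1})$ for $c<\ccol$, where $t=1$. In the collapsible regime one instead shows $|\SH_C(Y)|=\Theta(n)$ directly: the lower bound counts the $\Theta(n)$ non-faces whose addition completes a $\partial\Delta_{d+1}$, and the upper bound follows because in a $d$-collapsible complex every element of $\SH_C(Y)$ lies inside a bounded-size locally non-collapsible obstruction, of which a first-moment estimate over the logarithmic collapse clusters produces only $O(n)$.

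For $\SH_\mR$ the containment in $\SH_C$ disposes of the upper bounds in the collapsible and cyclic regimes. In the intermediate regime I would show $|\SH_\mR(Y)|=O(n)$ directly: a face in $\SH_\mR(Y)$ lies in the support of a $d$-cycle of $Y\cup\sigma$, hence inside the core of $Y\cup\sigma$, which equals $\tilde Y$ together with an $O(\log n)$-size cascade; since Theorem~\ref{thm:main}(II)(b) makes $\tilde Y$ $d$-acyclic, the cycle through $\sigma$ has bounded size, and the same first-moment bound over bounded minimal $d$-cycles gives $O(n)$. Combined with the $\Omega(n)$ contribution of $\partial\Delta_{d+1}$-completions this gives $\Theta(n)$ in both the collapsible and intermediate regimes. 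For the lower bound on $|\SH_\mR(Y)|$ in the cyclic regime it is enough to prove $|\SH_C(Y)\setminus\SH_\mR(Y)|=o(n^{d+1})$, and here Theorem~\ref{thm:main}(III) is essential: it forces $\beta_d(\tilde Y)=(f_d(\tilde Y)-f_{d-1}(\tilde Y))(1+o(1))$, hence $\partial_d(\tilde Y)$ has row rank $f_{d-1}(\tilde Y)(1+o(1))$ and its image is all but an $o(n^d)$-codimensional part of the $(d-1)$-chains of $\tilde Y$. Thus, for $\sigma\in\SH_C(Y)$, one cancels $\partial\sigma$ step by step along the $O(\log n)$-length cascade that carries $\sigma$ into the core until it is supported on core $(d-1)$-faces, and then fills it inside $\tilde Y$ by the near-surjectivity of $\partial_d(\tilde Y)$; this produces a genuine new $d$-cycle through $\sigma$, the exceptions — where the cascade's free boundary is not absorbed by $\tilde Y$, or the reduced $\partial\sigma$ hits the residual obstruction — being $o(n^{d+1})$ in number.

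The main obstacle is exactly this last transfer, from ``$\sigma$ grows the core'' to ``$\sigma$ grows the homology'', together with the cascade control it rests on. Concretely one must (i) pin down the right notion of an active face and prove that the re-coring cascade triggered by adding $\sigma$ is a.a.s.\ of size $O(\log n)$, so that the local event is well defined and the second moment concentrates, and (ii) go beyond local weak convergence in order to rule out an $\Omega(n^{d+1})$-sized exceptional family of $\sigma$ — arising from long-range cascades or from long-range obstructions to filling $\partial\sigma$ — and it is precisely here that the global, spectral input behind Theorem~\ref{thm:main}(III), controlling the rank of $\partial_d(\tilde Y)$, cannot be replaced by local reasoning. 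The intermediate regime is the cautionary case for the whole picture: there $\tilde Y$ is already macroscopic but $\partial_d(\tilde Y)$ is injective and far from surjective, which is exactly why $|\SH_C(Y)|=\Theta(n^{d+1})$ while $|\SH_\mR(Y)|$ stays $\Theta(n)$, and why the two shadow densities jump discontinuously at $\ccol$ and at $\cacy$.
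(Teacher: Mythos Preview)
Your treatment of the C-shadow is essentially the paper's: both identify $\SH_C$ with the local event ``all $d+1$ facets survive peeling'' (your active faces, with probability $1-t$ each, asymptotically independent), and both handle the subcritical upper bounds indirectly. The paper's device for the latter is sprinkling rather than structural: if $|\SH_C(Y)|\gg n$ for $Y=\LMc$ with $c<\ccol$, then coupling with $\LMvarc{c'}$ for $c<c'<\ccol$ hits the shadow with probability $1-o(1)$ and produces a non-gravel core, contradicting Theorem~\ref{thm:main}(I). The same sprinkling argument, run against Theorem~\ref{thm:main}(II)(b), gives $|\SH_\mR(Y)|=O(n)$ throughout $c<\cacy$.

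Your structural substitute in the intermediate regime does not work as stated: acyclicity of $\tilde Y$ together with a small re-coring cascade does \emph{not} force a cycle through $\sigma$ to be small. Writing such a cycle as $\sigma+z'$ with $z'$ supported in $\tilde Y\cup(\text{cascade})$, injectivity of $\partial_d(\tilde Y)$ only says $z'$ is determined by $\partial z'=-\partial\sigma$; it gives no bound on the support of $z'$, which may well be $\Theta(n^d)$ inside $\tilde Y$. So the first-moment count of bounded minimal cycles does not see these $\sigma$.

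The more serious gap is in the cyclic regime. The inference from ``$\mathrm{Im}\,\partial_d(\tilde Y)$ has codimension $o(n^d)$ in $C_{d-1}(\tilde Y)$'' to ``all but $o(n^{d+1})$ of the $\sigma\in\SH_C(Y)$ have fillable boundary'' is invalid: a single nonzero $v$ in the cokernel already obstructs every $\sigma$ with $\langle\partial^*v,e_\sigma\rangle\neq 0$, and if $v$ has support of order $n^d$ (which nothing rules out) this blocks $\Theta(n^{d+1})$ faces. Rank information alone carries no control over how the specific family $\{\partial\sigma\}$ sits relative to the obstruction. The paper bypasses this entirely with a global increment argument: since $\sigma\in\SH_\mR(Y)$ iff adding $\sigma$ increases $\beta_d$, the shadow density equals the expected increment of $\beta_d$ per random added $d$-face, which is $(d+1)\,g_d'(c)$ where $g_d(c)=\lim_n\binom nd^{-1}\E[\beta_d]$. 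Implicit differentiation through $t=e^{-c(1-t)^d}$ gives $(d+1)g_d'(c)=(1-t)^{d+1}$, matching the C-shadow density; Azuma-type concentration finishes. This ``differentiate the Betti number'' idea is what your proposal is missing.
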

\begin{figure}[h!]
    \centering
    \subfloat[~Density of the shadow of $G(n,\frac cn)$.]{{\includegraphics[width=0.465\textwidth]{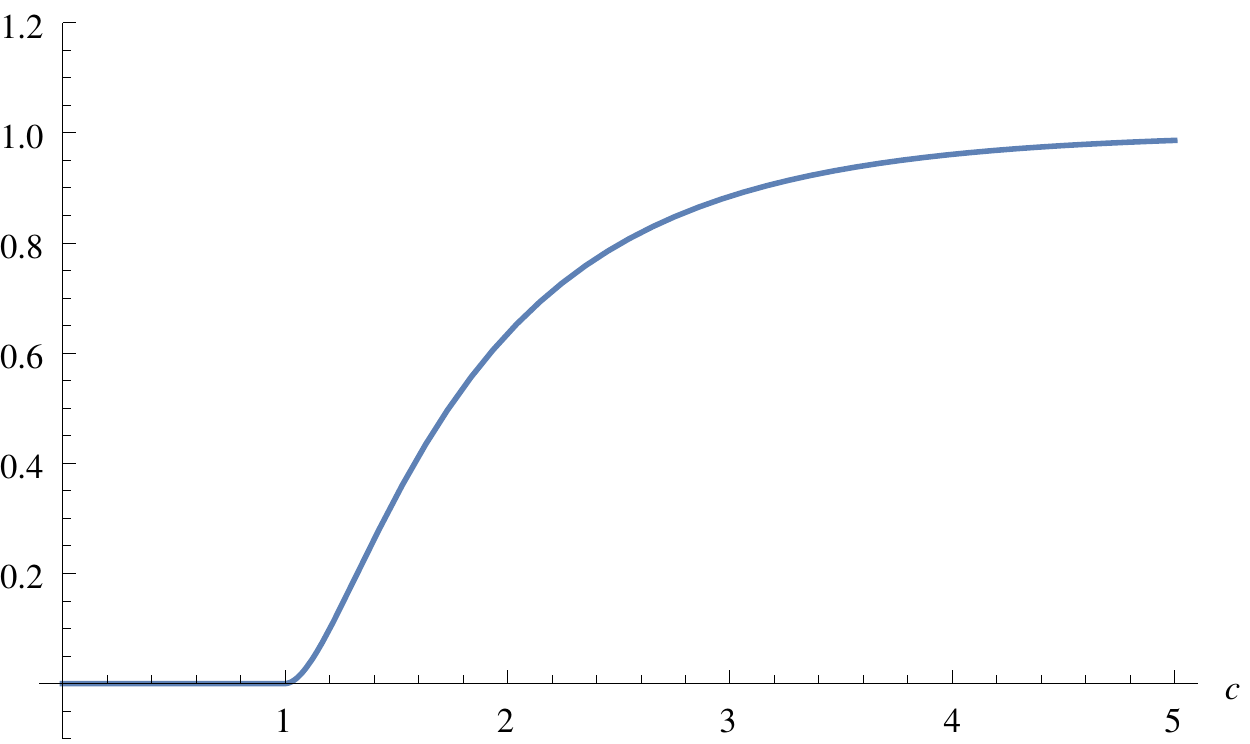} }\label{fig:shadowGraph}}%
    \qquad
    \subfloat[~Density of the C-shadow and $\mR$-shadow of $Y_2\left(n,\frac cn\right)$.]{{\includegraphics[width=0.465\textwidth]{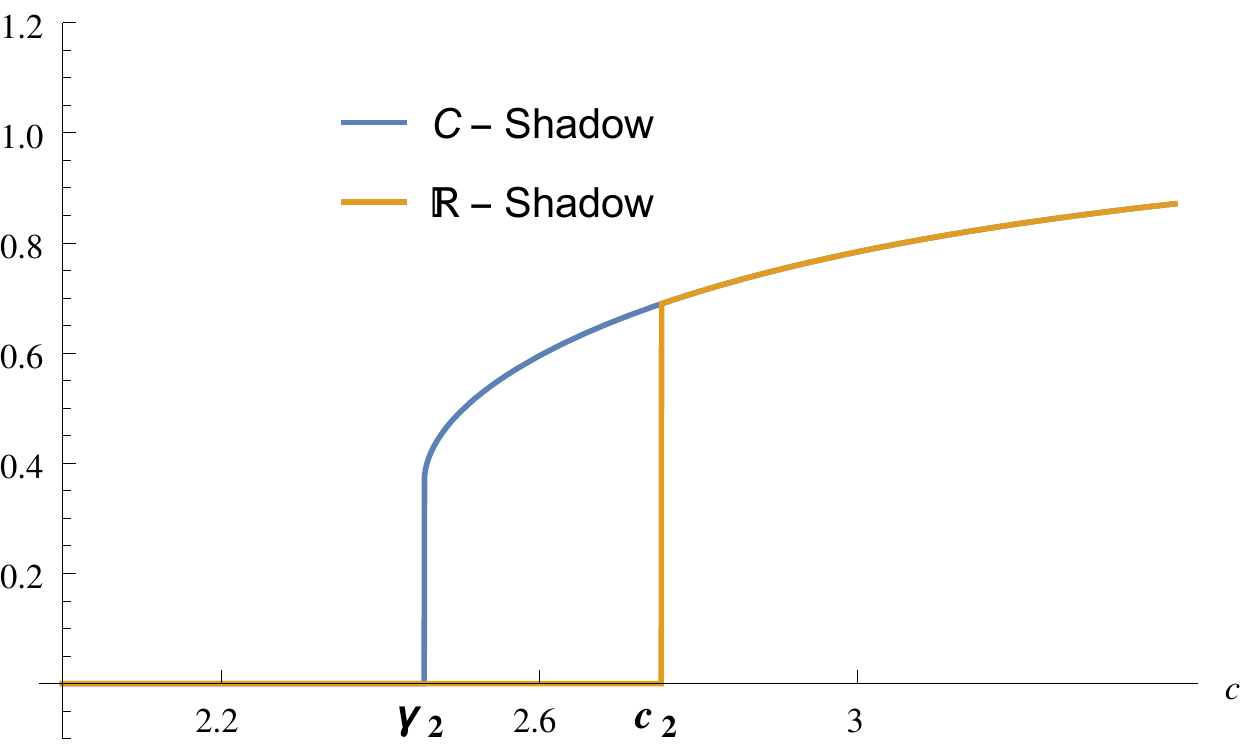} }\label{fig:shadowCompelx}}%
    \caption{Illustration of Theorem \ref{thm:randShadow} for $d=2$, and comparison to the density of the shadow of a random graph. }%
    \label{fig:shadowComparison}%
\end{figure}

%
%

An essential idea that is common to all these results is that in the range $p=\Theta(\frac 1n)$ many of the interesting properties of $Y_d(n,p)$ can be revealed by studying its {\em local} structure. Initially, this seemed as merely a useful tool in studying the threshold for $d$-collapsibility, and in establishing an upper bound on the threshold of the vanishing of the $d$-th homology. However, in obtaining a lower bound on this threshold, it became apparent that this idea should be viewed in the wider context of {\em local weak limits}. This framework was introduced by Benjamini and Schramm \cite{ben_sch} and Aldous and Steele \cite{ald_ste}. In recent years, this approach was used in deriving new asymptotic results in various fields of mathematics (e.g. \cite{ald_zeta,lyo_asym}).

The study of $d$-collapsibility in random complexes was significantly influenced by work on $k$-cores in random hypergraphs and specifically the works by Molloy \cite{molloy} and Riordan \cite{rio}. Also, the proof of Theorem \ref{thm:main} makes substantial use of tools from the paper of Bordenave, Lelarge and Salez \cite{diluted} on the rank of the adjacency matrix of random graphs.

The rest of the paper is organized as follows. Section \ref{sec:prel} gives some necessary background material about simplicial complexes. In Section \ref{sec:local} we introduce the concept of a Poisson $d$-tree which is the local weak limit of random simplicial complexes. 
The main ingredients of the proofs of the main theorems are presented in Sections \ref{sec:col} and \ref{sec:acy}, that respectively address the subjects of collapsibility and acyclicity. Concluding remarks and open questions are presented in Section \ref{sec:open}.

\section{Preliminaries}
\label{sec:prel}

A simplicial complex $Y$ is a collection of subsets of its {\em vertex set} $V$ that is closed under taking subsets. Namely, $\sigma\in Y$ and $\tau\subseteq \sigma$ imply that $\tau\in Y$ as well. Members of $Y$ are called {\em faces} or {\em simplices}. The {\em dimension} of the simplex $\sigma\in Y$ is defined as $|\sigma|-1$, and $\dim(Y)$ is defined as $\max\dim(A)$ over all faces $A\in Y$. A $d$-dimensional simplex is also called a $d$-simplex or a $d$-face, and a $d$-dimensional simplicial complex is also referred to as a $d$-complex. The set of $j$-faces in $Y$ is denoted by $Y_j$, and the face numbers by $f_j(Y):=|Y_j|$. For $t<\dim(Y)$, the $t$-{\em skeleton} of $Y$ is the simplicial complex that consists of all faces of dimension $\le t$ in $Y$, and $Y$ is said to have a {\em full} $t$-dimensional skeleton if its $t$-skeleton contains all the $t$-faces of $V$. In this paper, the {\em degree} $d_Y(\tau)$ of a face $\tau$ in a complex $Y$ is the number of $\dim(Y)$-faces that contain it. A face of degree zero is said to be {\em exposed}. Although we are directly interested only in finite complexes, infinite ones do play a role here, but we consider only {\em locally-finite} complexes in which every face has a finite degree. We occasionally use the bipartite incidence graph between $(d-1)$-faces and $d$-faces of a $d$-complex $Y$. This allows us, in particular, to speak about {\em distances} among such faces.

The permutations on the vertices of a face $\sigma$ are split in two
{\em orientations}, according to the permutation's sign.
The {\em boundary operator} $\partial=\partial_d$ maps an oriented
$d$-simplex $\sigma = (v_0,...,v_d)$ to the formal sum
$\sum_{i=0}^{d}(-1)^i(\sigma^i)$, where $\sigma^i=(v_0,...v_{i-1},v_{i+1},...,v_d)$ is an oriented
$(d-1)$-simplex. We fix some commutative ring $R$ and linearly extend
the boundary operator to free $R$-sums of
simplices. We denote by $\partial_d(Y)$ the $d$-dimensional boundary operator of a $d$-complex $Y$.

When $Y$ is finite, we consider the ${f_{d-1}(Y)}\times{f_d(Y)}$ matrix form of $\partial_d$ by choosing arbitrary orientations for
$(d-1)$-simplices and $d$-simplices. Note that changing the
orientation of a $d$-simplex (resp. $d-1 $-simplex) results in
multiplying the corresponding column (resp. row) by $-1$.

The $d$-th homology group $H_d(Y;R)$ (or vector space if $R$ is a field) of a $d$-complex $Y$ is the (right) kernel of its boundary operator $\partial_d$. Most of the homology groups in this paper are considered over $\mR$. An element in $H_d(Y;\mR)$ is called a {\em $d$-cycle}, and the whole group is called the $d$-cycle space of $Y$. The {\em $d$-th Betti number} $\beta_d(Y;\mR)$ of a complex $Y$ is defined to be the dimension of $H_d(Y;\mR)$. 

Recall the concept of elementary collapse as defined in the introduction. A $d$-collapse phase is a procedure in which all the possible elementary $d$-collapses take place at once. In case more than one $(d-1)$-face can collapse some $d$-face, one of them is chosen arbitrarily. Given a $d$-complex $Y$, the complex $R_k(Y)$ is the complex that is obtained from $Y$ after $k$ phases of $d$-collapse. Similarly, $R_{\infty}(Y)$ is obtained after all possible $d$-collapse steps are carried out. A {\em $d$-core} (or core, for brevity) is a $d$-complex in which all the $(d-1)$-faces are of degree $\ge 2$. The {\em $d$-core of $Y$} is the maximal $d$-core subcomplex of $Y$. Note that the $d$-core of $Y$ is obtained from $R_{\infty}(Y)$ by removing the exposed $(d-1)$-faces.

\section{Poisson $d$-tree}
\label{sec:local}
The concept of a Poisson $d$-tree process was introduced in \cite{ALLM} and turned out to be extremely useful in the study of random simplicial complexes. It can be viewed as a high-dimensional counterpart of the Poisson Galton-Watson process which plays a key role in the study of the giant component in $G(n,p)$ graphs.

A {\em rooted $d$-tree} is a pair $(T,o)$ where $T$ is a $d$-complex and $o$ is some $(d-1)$-face of $T$. A $d$-tree is generated by the following process. Initially the complex consists of the $(d-1)$-face $o$. At every step $k\ge 0$, every $(d-1)$-face $\tau$ of distance $k$ from $o$ picks a non-negative number $m=m_\tau$ of new vertices $v_1,...,v_m$, and adds the $d$-faces $v_1\tau,...,v_m\tau$ to $T$.

We use some self-explanatory terminology in our study of $d$-trees. A {\em leaf} is a $(d-1)$-face with no {\em descendant} $d$-faces. A $(d-1)$-face $\tau$ is an {\em ancestor} of a $(d-1)$-face $\tau'$ if $\tau'$ belongs to the {\em subtree rooted at $\tau$}. If $(T,o)$ is a rooted $d$-tree and $T'$ is a subtree of $T$ which contains the $(d-1)$-face $o$, we refer to $(T',o)$ as a {\em rooted subtree} of $(T,o)$. The {\em depth} of a $(d-1)$-face is its distance from the root, and the depth of the $d$-tree is the maximal depth of any of its $(d-1)$-faces.

A {\em Poisson $d$-tree with parameter $c$}, denoted by $T_d(c)$, is a rooted $d$-tree in which all the numbers $m_\tau$ throughout this generative process are i.i.d.\ $Poi(c)$-distributed. The rooted subtree of $T_d(c)$ that consists of the first $k$ generations of this process is denoted $T_{d,k}(c)$.

The most important fact about $T_d(c)$ in this context is that it approximates the local neighborhood of a $(d-1)$-face in $\LMc$.
\begin{lemma}\cite{ALLM}
\label{lem:loc_app}
For every fixed integer $k>0$, the $k$-neighborhood of a fixed $(d-1)$-face {$\tau$} in { $Y=\LMc$} converges in distribution to the $k$-neighborhood of the root of $T_d(c)$ as $n\to\infty$.
\end{lemma}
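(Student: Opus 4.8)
The plan is to establish the local convergence by the standard method of moments / subgraph-counting argument adapted to the bipartite incidence structure of $d$-complexes, exactly analogous to the classical proof that the local limit of $G(n,c/n)$ around a vertex is a Poisson Galton--Watson tree. Fix a finite rooted $d$-tree $(T,o)$ of depth at most $k$, and let $p = p_{T}$ be the probability that the $k$-neighborhood of the root of $T_d(c)$ is isomorphic (as a rooted $d$-complex) to $(T,o)$. We must show that the probability that the $k$-neighborhood of a fixed $(d-1)$-face $\tau$ in $Y=\LMc$ is isomorphic to $(T,o)$ converges to $p_T$ as $n\to\infty$. Since there are only countably many isomorphism types of finite rooted $d$-trees, and since (as one checks separately) the $k$-neighborhood of $\tau$ in $Y$ is a $d$-tree with probability $1-o(1)$, this suffices for convergence in distribution.

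First I would describe the exploration process in $Y$ that mirrors the generative process of $T_{d,k}(c)$. Starting from $\tau$, one reveals in a breadth-first manner the $d$-faces of $Y$ containing each $(d-1)$-face at distance $j$ from $\tau$, for $j=0,1,\dots,k-1$; a $(d-1)$-face $\sigma$ at distance $j$ is contained in $\sigma \cup \{v\}$ for $v$ ranging over the $n - O_k(1)$ vertices not yet used by the currently explored subcomplex. For the neighborhood to be a $d$-tree, the new vertices must all be distinct from each other and from previously seen vertices, and no "collisions" may occur; conditioned on no collisions, each already-discovered $(d-1)$-face of the form $\sigma \cup \{v\}$ minus one of its other facets becomes a new $(d-1)$-face to explore. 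The number of new $d$-faces attached to $\sigma$ is a $\mathrm{Bin}(n - O_k(1),\, c/n)$ random variable, which converges in distribution to $\mathrm{Poi}(c)$; moreover, conditionally on the history, these binomials attached to the different $(d-1)$-faces being explored are independent, because each $d$-face is present in $Y$ independently. Multiplying the finitely many (a number depending only on $|T|$ and $k$) such convergent and asymptotically independent binomial factors, and taking into account the $(1+o(1))$ combinatorial factor counting the ways to embed the distinct new vertices into $[n]$, gives the desired limit $p_T$.

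The two routine-but-necessary points to handle are: (i) the probability of a collision during the depth-$k$ exploration is $O_k(1/n)=o(1)$, since each of the boundedly many pairs of revealed vertices coincides with probability $O(1/n)$ and each revealed $d$-face that would close a cycle is present with probability $c/n$; and (ii) the truncation of each $\mathrm{Bin}(n,c/n)$ offspring count is harmless because the total number of faces revealed within distance $k$ has an exponentially decaying tail uniform in $n$ (it is dominated by a branching process), so one may restrict attention to neighborhoods of bounded size and then the finite product of local limits applies. Neither requires anything beyond the union bound and the elementary fact that $\mathrm{Bin}(n,c/n)\Rightarrow\mathrm{Poi}(c)$.

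The main obstacle, and the only place where the higher-dimensional setting genuinely differs from the graph case, is bookkeeping the combinatorial structure correctly: when a new vertex $v$ is attached to a $(d-1)$-face $\sigma=\{w_0,\dots,w_{d-1}\}$ via the $d$-face $\sigma\cup\{v\}$, this $d$-face contains $d$ new $(d-1)$-subfaces (those obtained by deleting one $w_i$ and keeping $v$), each of which spawns its own subtree, whereas in a graph a new edge produces exactly one new vertex to explore. One must check that the Poisson $d$-tree process as defined in Section~\ref{sec:local} records exactly this branching pattern, so that the isomorphism type of the explored neighborhood of $\tau$ in $Y$ (on the no-collision event) is in bijective, measure-preserving correspondence with the isomorphism type of $T_{d,k}(c)$. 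Once this correspondence is set up, the proof reduces to the Poisson approximation of binomials together with the collision and tail estimates above.
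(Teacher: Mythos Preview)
Your proposal is correct and follows essentially the same approach as the paper's proof: show that the $k$-neighborhood of $\tau$ is a.a.s.\ a $d$-tree (no collisions), and then use the Poisson approximation $\mathrm{Bin}(n-o(n),c/n)\to\mathrm{Poi}(c)$ for the conditionally independent offspring counts at each $(d-1)$-face. The only organizational difference is how the neighborhood size is controlled: the paper first applies a global Chernoff bound to conclude that a.a.s.\ every $(d-1)$-face has degree at most $(d+1)\log n$, so the $k$-neighborhood has $O(\log^k n)$ vertices and the collision probability is $O(\log^{2k} n / n)=o(1)$, whereas you restrict to bounded-size neighborhoods via branching-process domination and a tail bound before doing the collision estimate. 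Both routes are standard and neither buys anything the other does not.
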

\begin{proof}
First, we observe that the degree of every $(d-1)$-face in $Y$ is $\mbox{Bin}\left(n-d,\frac cn\right)$-distributed. Chernoff's Inequality implies that a.a.s.\ no degree exceeds $(d+1)\log{n}$. We claim that for every fixed $k\ge 0$, the $k$-neighborhood of $\tau$ in $Y$ is a.a.s.\ a $d$-tree. The cases $k=0$ and $k=1$ are trivial. Assume, by induction, that the $k$-neighborhood $N_k(\tau)$ of $\tau$ is a $d$-tree. The tree structure will be violated in the $(k+1)$-th layer if and only if there is some $(d-1)$-face $\eta$ of distance $k$ from $\tau$, and some vertex $v\in N_k(\tau)$ such that $\eta v \in Y$. However, by the bound on the degrees of the $(d-1)$-faces, there only $O(\log^k{n})$ such $(d-1)$-faces $\eta$ and  $O(\log^{k}n)$ such vertices $v$. Therefore, the probability that a face of the form $\eta v$ belongs to $Y$ is negligible. In addition, conditioned on the $(k+1)$-neighborhood being a $d$-tree, the number $m_\eta$ of new vertices that $\eta$ adds to the $d$-tree is $\mbox{Bin}\left(n-o(n),\frac cn\right)$-distributed, which tends to Poisson with parameter $c$ as $n\to\infty$.
\end{proof}
This lemma easily implies convergence  of $\LMc\to T_d(c)$ in the sense of local weak convergence introduced by Benjamini and Schramm \cite{ben_sch} and Aldous and Steele \cite{ald_ste}. Here is a brief explanation of this concept. A rooted $d$-complex is a pair $(Y,\tau)$ of a $d$-complex and some $(d-1)$-face in it. We denote by $(Y,\tau)_k$ the $\tau$-rooted subcomplex of $(Y,\tau)$ comprised of all the $d$-faces of distance at most $k$ from $\tau$ and their subfaces. Let $\mathcal{Y}_d$ be the set of all (isomorphism types of) rooted $d$-complexes, equipped with the metric
\[
\mbox{dist}((Y,\tau),(Y',\tau')) = \inf\left\{\frac{1}{t+1}~:~(Y,\tau)_k\cong(Y',\tau')_k\right\}.
\]
It can be easily verified that $(\mathcal{Y}_d,\mbox{dist})$ is a separable and complete metric space, which comes, as usual, equipped with its Borel $\sigma$-algebra (See \cite{ald_lyo}).
The fact that $\LMc$ converges to $T_d(c)$ means that for every bounded and continuous function $f:\mathcal{Y}_d\to\mR$,
\[
\E_{Y\sim\LMc}[f(Y,\tau)]\xrightarrow[n\to\infty]{} \E_{T\sim T_d(c)}[f(T,o)].
\]
As we explain below, this fact will be applied directly to a function of particular interest in this context, namely, the degree of the root $\tau$ after $k$ phases of $\tau$-rooted collapse. In addition, it will be used in combination with the spectral theorem to bound the Betti numbers of $\LMc$ with the spectral measure of the Poisson $d$-tree.

\subsection{Rooted collapse}
Let $Y$ be a $d$-complex and $\tau$ some $(d-1)$-face of $Y$. A {\em $\tau$-rooted collapse} of $Y$ is a $d$-collapse process in which we forbid to collapse $\tau$.
Let $k$ be a non-negative integer. The complex obtained from $Y$ after $k$ phases in the $\tau$-rooted collapse process is denoted by $R_k(Y,\tau)$.

In the case $Y=\LMc$, the degree $d_{R_k(Y,\tau)}(\tau)$ turns out to be relevant to several different questions. We approximate it using $\delta_k:=d_{R_k(T,o)}(o)$, where $T=T_d(c)$, the Poisson $d$-tree with root $o$.

\begin{lemma}
With the above notations
\[\E[d_{R_k(Y,\tau)}(\tau)] \xrightarrow[n\to\infty]{} \E[\delta_k].
\]
\end{lemma}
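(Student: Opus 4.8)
The plan is to deduce the statement from the local weak convergence $\LMc \to T_d(c)$ established via Lemma \ref{lem:loc_app}, by exhibiting $d_{R_k(\cdot,\tau)}(\tau)$ as (a bounded truncation of) a continuous function on the space $(\mathcal Y_d, \mbox{dist})$. The crucial observation is that whether a given $d$-face $\sigma \ni \tau$ survives $k$ phases of $\tau$-rooted collapse is determined entirely by the combinatorial isomorphism type of the $k$-neighborhood $(Y,\tau)_k$: a $d$-face at distance $1$ from $\tau$ is collapsed in phase $j$ only through some $(d-1)$-face other than $\tau$ becoming free, which in turn depends on the fate of $d$-faces at distance $\le 2$, and so on, so that after unwinding the recursion the whole rooted-collapse history up to phase $k$ — in particular $d_{R_k(Y,\tau)}(\tau)$ — is a function of $(Y,\tau)_{k}$ alone. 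Hence $g_k(Y,\tau) := d_{R_k(Y,\tau)}(\tau)$ is locally constant, therefore continuous, on $\mathcal Y_d$.

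First I would make this reduction precise: fix $k$, and argue by induction on the phase index that the set of $d$-faces removed in each of the first $k$ phases of $\tau$-rooted collapse, together with the $(d-1)$-faces that triggered them, lies within distance $k$ of $\tau$ and is determined by $(Y,\tau)_k$. This gives $g_k(Y,\tau) = g_k\big((Y,\tau)_k\big)$, so $g_k$ is continuous. The function is not bounded, though, so I cannot apply the definition of local weak convergence directly; I would instead introduce the truncation $g_k^{(M)} := \min(g_k, M)$ for a parameter $M$, which is bounded and continuous, hence
\[
\E_{Y\sim\LMc}\big[g_k^{(M)}(Y,\tau)\big] \xrightarrow[n\to\infty]{} \E_{T\sim T_d(c)}\big[g_k^{(M)}(T,o)\big] = \E\big[\min(\delta_k, M)\big].
\]

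The remaining point is to remove the truncation, i.e.\ to pass from $\E[g_k^{(M)}]$ to $\E[g_k]$ on both sides uniformly. For the finite side this is exactly the uniform integrability supplied inside the proof of Lemma \ref{lem:loc_app}: a.a.s.\ every $(d-1)$-face of $\LMc$ has degree at most $(d+1)\log n$, and in fact $d_{R_k(Y,\tau)}(\tau) \le d_Y(\tau) \sim \mbox{Bin}(n-d, c/n)$, whose tail decays fast enough that the contribution of the event $\{d_Y(\tau) > M\}$ to the expectation is $o_M(1)$ uniformly in $n$; combined with the trivial bound $g_k \le n$ on the (vanishingly rare) exceptional event, one gets $\big|\E[g_k] - \E[g_k^{(M)}]\big| = o_M(1)$ uniformly in $n$. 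On the limit side, $\delta_k \le d_{T_d(c)}(o) \sim \mbox{Poi}(c)$, which is integrable, so $\E[\min(\delta_k,M)] \to \E[\delta_k]$ as $M\to\infty$. A standard three-epsilon argument then yields $\E[d_{R_k(Y,\tau)}(\tau)] \to \E[\delta_k]$.

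The main obstacle is the first step — verifying rigorously that $\tau$-rooted collapse is a \emph{local} operation, i.e.\ that the collapse decisions propagate inward from distance $k$ and never depend on faces farther than $k$ from $\tau$ after $k$ phases. One has to be a little careful about the tie-breaking rule (when several free $(d-1)$-faces could collapse the same $d$-face, one is chosen arbitrarily): one must check that the \emph{degree of $\tau$} after $k$ phases is insensitive to these choices, even though the precise complex $R_k(Y,\tau)$ is not canonical. This follows because a $d$-face incident to $\tau$ is destroyed in phase $j$ iff one of its other $(d-1)$-subfaces is free at the start of phase $j$, and freeness of a face after $j-1$ phases is itself choice-independent by the same downward induction; so while the identity of the ``triggering'' face may vary, the set of surviving $d$-faces at $\tau$ does not. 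Once this locality and choice-independence are nailed down, the rest is the routine local-weak-convergence-plus-uniform-integrability machinery sketched above.
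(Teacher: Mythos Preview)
Your approach is correct and is precisely what the paper does: the paper omits the proof but remarks that the function is continuous (being determined by a bounded neighborhood of the root) yet unbounded, and that one bypasses this via the truncation $\min\{d_{R_k(Y,\tau)}(\tau),A\}$ --- your $g_k^{(M)}$ --- after which the uniform-integrability step is routine. One cosmetic wording issue: collapse phases act on all of $Y$, not just near $\tau$, so it is not the set of removed faces that lies within bounded distance of $\tau$, but rather the \emph{degree of $\tau$} after $k$ phases that is determined by a bounded neighborhood (of radius roughly $2k+1$ in the bipartite incidence metric, not $k$); this is exactly what you need and does not affect the argument.
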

Note that this is not a direct corollary of the local weak convergence. Even though the function $d_{R_k(Y,\tau)}(\tau)$ is continuous, being dependent only on some fixed neighborhood of the root, it is not bounded. Nevertheless, we allow ourselves to omit the proof, since this difficulty can be bypassed by a simple calculus trick. Namely, by considering the function $\min\{d_{R_k(Y,\tau)}(\tau),A\}$, where $A$ is a sufficiently
large constant.

\begin{lemma}
\label{lem:delta_k_distr}
Let $c>0$ and $(t_k)_{k\ge -1}~$ a sequence of real numbers defined by $$t_{-1}=0~~,~~~t_{k+1}=e^{-c(1-t_k)^d},~\forall k\ge 0.$$ Then, $\delta_k$ is Poisson distributed with parameter $c(1-t_{k-1})^d$, for every $k\ge 0$.
\end{lemma}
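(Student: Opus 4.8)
The plan is to prove the statement by induction on $k$, tracking the joint structure of the Poisson $d$-tree under rooted collapse. For $k=0$ there is no collapse, so $\delta_0 = d_{R_0(T,o)}(o) = d_T(o) = m_o$, which is $\mathrm{Poi}(c) = \mathrm{Poi}(c(1-t_{-1})^d)$ since $t_{-1}=0$; this is the base case. For the inductive step, the key observation is a recursive self-similarity of the Poisson $d$-tree: each $d$-face $v\tau$ attached to a $(d-1)$-face $\tau$ introduces $d$ new $(d-1)$-faces (the other facets of $v\tau$, namely $v$ together with all but one vertex of $\tau$), and the subtree hanging off each such new $(d-1)$-face $\tau'$ is, conditionally, again a Poisson $d$-tree $T_d(c)$ rooted at $\tau'$ — independent across the different $\tau'$. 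This is exactly the branching structure built into the generative definition of $T_d(c)$.

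First I would make precise what happens in one phase of $o$-rooted collapse, from the perspective of the root. After $k$ phases of collapse in the subtree rooted at a non-root $(d-1)$-face $\tau'$ (which never sees the ``forbidden'' status of $o$, since $o$ lies outside that subtree), $\tau'$ survives — i.e.\ is still present with positive degree — with some probability; and a $d$-face $v\tau$ at the root gets collapsed at phase $k+1$ precisely when at least one of its $d$ lower facets $\tau'\ne$ (the root side) has been collapsed down to degree $1$ and then removed, equivalently when at least one of the $d$ subtrees hanging from $v\tau$ has ``died'' by level $k$. So the probability that the $d$-face $v\tau$ persists to $R_{k+1}(T,o)$ equals the probability that all $d$ of its non-root facets survive $k$ phases of (unrooted) collapse in their own subtrees. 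I would let $q_k$ denote the probability that the root of an ordinary (unrooted) $T_d(c)$ survives $k$ collapse phases — equivalently has positive degree in $R_k(T_d(c), \text{root})$ when we do \emph{not} forbid collapsing the root — and show $q_k = 1 - t_k$, with $t_k$ being the extinction-type probability. Indeed the root dies within $k$ phases iff \emph{every} $d$-face at the root gets collapsed, each independently with probability $1-(1-t_{k-1})^d$ over the Poisson number $m\sim\mathrm{Poi}(c)$ of such faces, giving $\Pr[\text{root dies by phase }k] = \mathbb{E}[(1-(1-t_{k-1})^d)^m] = e^{-c(1-t_{k-1})^d}$, which is exactly $t_k$ by the defining recursion. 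This both establishes $q_k = 1-t_k$ and is the mechanism behind the self-reference.

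With this in hand, the computation of the law of $\delta_{k}$ for $k\ge 1$ is a thinning argument: $\delta_k = d_{R_k(T,o)}(o)$ counts those $d$-faces $v o'$ at the root (originally $m_o \sim \mathrm{Poi}(c)$ many, where $o'$ ranges over facets of $o$ — more precisely the $d$-faces are indexed by the new vertices $v_1,\dots,v_{m_o}$) that have \emph{not} been collapsed after $k$ phases, and each such $d$-face survives, independently, exactly when all $d$ of its non-root facets survive $k-1$ phases of collapse in their independent subtrees, i.e.\ with probability $(1-t_{k-1})^d = q_{k-1}^d$. Poisson thinning then gives $\delta_k \sim \mathrm{Poi}(c(1-t_{k-1})^d)$, as claimed. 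The main obstacle — and the only point that needs genuine care rather than bookkeeping — is justifying the conditional independence and self-similarity of the subtrees hanging off the root's $d$-faces, and checking that a $d$-face at the root is collapsed in phase $k+1$ if and only if one of its subtrees has become a leaf/died by phase $k$ (in particular that the ``forbid collapsing $o$'' rule is irrelevant to the dynamics strictly below the root). Once the bijective correspondence between ``$d$-face at root collapses'' and ``some pendant subtree dies'' is nailed down, everything reduces to the one-line Poisson generating-function identity $\mathbb{E}[z^{\mathrm{Poi}(c)}]=e^{-c(1-z)}$ applied with $z = (1-t_{k-1})^d$.
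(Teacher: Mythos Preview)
Your proposal is correct and follows essentially the same approach as the paper: induction on $k$, self-similarity of the subtrees hanging off the root's $d$-faces (each an independent $T_d(c)$), survival of a root $d$-face after $k$ phases iff all $d$ of its non-root facets retain positive degree in their subtrees after $k-1$ phases, and Poisson thinning to conclude. The paper's proof is terser---it invokes the induction hypothesis directly to get $\Pr[\delta_{k-1}>0]=1-t_{k-1}$ and then notes that a $\mathrm{Poi}(c)$-thinned-by-$(1-t_{k-1})^d$ variable is $\mathrm{Poi}(c(1-t_{k-1})^d)$---whereas you spell out the generating-function identity and introduce the auxiliary $q_k$, but the logical skeleton is identical.

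One small terminological wrinkle: when you speak of the subtree at a non-root facet $\tau'$ undergoing ``unrooted'' collapse, note that while $\sigma$ is still present in the full tree, $\tau'$ has degree at least $1$ there, so $\tau'$ is never free within its own subtree until its subtree-degree hits $0$; in other words, the dynamics restricted to the subtree is effectively $\tau'$-rooted collapse, exactly as the paper phrases it. This does not affect your computation, since what you actually use is $\Pr[\text{subtree root has degree }0\text{ after }k-1\text{ phases}]=t_{k-1}$, which is the same in either formulation.
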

We refer throughout the paper to the sequences $t_k$ of real numbers and $\delta_k$ of random variables that are defined here without denoting the underlying parameter $c>0$ that is clear from the context.
\begin{proof}
By induction on $k$. The case $k=0$ is trivial since $\delta_0$ is Poisson distributed with parameter $c$. For the induction step, let us consider the distribution of $\delta_k$. A $d$-face $\sigma$ that contains the root $o$ survives $k$ phases of rooted collapse if and only if each of its $(d-1)$-faces $\tau$ other than the root (there are $d$ such $\tau$'s) is contained in a $d$-face other then $\sigma$ after $k-1$ phases. This occurs if and only if $\tau$ has a positive degree in the subtree of $T$ rooted at $\tau$ after $k-1$ phases of $\tau$-rooted collapse. Since this subtree is also a Poisson $d$-tree with parameter $c$, this occurs, by the induction hypothesis, with probability $\Pr[\delta_{k-1} > 0]=1-t_{k-1}$. Moreover, different branches of the tree are independent, so these events for different $\tau$'s and $\sigma$'s are independent.
Namely, the distribution of $\delta_k$ is a Binomial distribution with $\delta_0\sim\mbox{Poi(c)}$ trials and success probability $(1-t_{k-1})^d$. By a standard computation in probability theory, this implies that $\delta_k$ is Poisson distributed with parameter $c(1-t_{k-1})^d$.
\end{proof}

We say that {\em a rooted $d$-tree is collapsible} if its root gets exposed in the rooted collapse process. For instance, the previous lemma shows that the probability that $T_d(c)$ is collapsed after $k$ phases is $t_{k}$, and the probability that $T_d(c)$ is collapsible is $t=t(c,d)$.

\section{$d$-collapsibility}
A simple calculus exercise tells us that the behavior of the sequence $(t_k)$ of Lemma \ref{lem:delta_k_distr} changes quite substantially when $c=\ccol$. Namely,
\[\lim_{k\to\infty} t_k = \left\{\begin{matrix}
1&c<\ccol\\
t& c>\ccol
\end{matrix}, \right.
\]
where $t=t(c,d)$ is as defined just before Theorem \ref{thm:main}. In other words, if $c<\ccol$ then the root of $T_d(c)$ gets exposed after $k$ collapse phases with probability $1-o_k(1)$. Moreover, the expected degree of the root is $o_k(1)$. On the other hand, if $c>\ccol$ then, for arbitrarily large $k$, with probability bounded away from zero some $d$-faces that contain the root will survive the collapse process.

How do these facts reflect on the behavior of the random simplicial complex $Y=\LMc$ under $d$-collapse phases? Many parameters of $R_k(Y)$ can be understood almost directly from the $\tau$-rooted collapse of $Y$, where $\tau$ is a typical $(d-1)$-face. Moreover, the Poisson $d$-tree plays a key role here since $k$ phases of $\tau$-rooted collapse depend only on the $k$-neighborhood of $\tau$. Consequently, as $k$ grows, almost all $(d-1)$-faces of $Y$ will either collapse or become exposed in $R_k(Y)$ if $c<\ccol$. On the other hand, if $c>\ccol$, a constant fraction of the $(d-1)$-faces survive $k$ phases of $d$-collapse, but only very few of them remain with degree $1$, giving the collapse process a slim chance to continue much further. In fact, the fraction of the $(d-1)$-faces that are contained in $Y$'s core is asymptotically approximated by the probability that the root of $T_d(c)$ has degree $\ge 2$ after infinitely many collapse phases. 

While the transition from the Poisson $d$-tree to the random simplicial complex is straightforward in the subcritical regime, in the supercritical regime we follow an involved argument of Riordan \cite{rio} for $k$-cores of random graphs. The reader is encouraged to read the introduction of Riordan's paper for an intuitive discussion on the proof method.

\label{sec:col}
\subsection{The collapsible regime - Theorem \ref{thm:main}~(I)}
Let $Y=\LMc$, $c<\ccol$, and let $\tau$ be some $(d-1)$-face in $Y$.
\begin{align}
\E[f_d(R_\infty(Y))] ~\le~& \E[f_d(R_k(Y))] \nonumber\\
=~&
\frac 1{d+1}\E\left[\sum_{\tau\in Y_{d-1}} \mathbf{1}_{\tau\in R_k(Y)}\cdot d_{R_k(Y)}(\tau) \right]\nonumber\\
=~& \frac 1{d+1}{n\choose d}\E\left[\mathbf{1}_{\tau\in R_k(Y)}\cdot d_{R_k(Y)}(\tau) \right]\label{eqn:1113}\\
\le~&\frac 1{d+1}{n \choose d}\E\left[ d_{R_k(Y,\tau)}(\tau) \right]\label{eqn:1114}\\
=~&\frac 1{d+1}{n\choose d}(1+o(1))\E[\delta_k]\nonumber\\
=~&\frac{1}{d+1}{n\choose d}(1+o(1))c(1-t_{k-1})^d.\nonumber
\end{align}
Identity (\ref{eqn:1113}) is obtained by considering some fixed $(d-1)$-face $\tau$, using linearity of expectation and symmetry. 
The subsequent inequality (\ref{eqn:1114}) is due to the fact that in the $\tau$-rooted collapse process fewer collapses occur than in $d$-collapse phases, whence an inequality $d_{R_k(Y)}(\tau)\le d_{R_k(Y,\tau)}(\tau)$ between $\tau$'s degrees after $k$ phases in either $d$-collapse processes.
The following equations are straightforward applications of the lemmas in Section \ref{sec:local}.

Consequently,  $f_d(R_\infty(Y)) = o(n^d)$ a.a.s. for every $c<\ccol$.

The argument which completes the proof says that for every $c>0$, the complex $\LMc$ has no core subcomplex with $o(n^d)$ $d$-faces, other than vertex disjoint $\partial\Delta_{d+1}$'s. This is proved in Theorem 4.1 of \cite{ALLM}, concerning inclusion-minimal core complexes. It turns out that a slight modification of that proof yields a more general conclusion.

A $d$-complex whose $d$-faces are comprised of a vertex-disjoint union of boundaries of $(d+1)$-simplices is called here a $d$-{\em gravel}. 

\begin{lemma}
\label{lem:noSmallCores}
For every integer $d\ge 2$ and real $c>0$ real there is $\alpha>0$ such that a.a.s.\ the following holds. Let $Y=\LMc$, then either $R_\infty(Y)$ is a $d$-gravel or $f_d(R_\infty(Y))>\alpha n^d$.
\end{lemma}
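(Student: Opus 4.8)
The plan is to show that a core complex with few $d$-faces must essentially be a disjoint union of boundaries of $(d+1)$-simplices, by a first-moment (union-bound) argument over potential small cores. A $d$-core $C$ on $v$ vertices has every $(d-1)$-face of degree $\ge 2$, so if $C$ has $f_d$ $d$-faces and $f_{d-1}$ many $(d-1)$-faces then $(d+1)f_d = \sum_\tau d_C(\tau) \ge 2 f_{d-1}$, i.e. $f_{d-1} \le \tfrac{d+1}{2} f_d$. Also every vertex of $C$ must lie in a $(d-1)$-face, hence in a $d$-face, so $v \le (d+1)f_d$; more usefully, a minimal-by-inclusion core on $v$ vertices has $f_{d-1} \ge$ something like $\binom{v-1}{d-1}$-ish when $v$ is small — the key point (this is Theorem 4.1 of \cite{ALLM}) is that a core which is not a $d$-gravel is quite "dense" relative to its vertex set. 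So first I would recall from \cite{ALLM} the structural dichotomy for \emph{minimal} cores: a minimal core is either a single $\partial\Delta_{d+1}$ or has $f_d \ge (1+\epsilon)\,\tfrac{2}{d+1} f_{d-1}$ — strictly more $d$-faces than the bare core inequality forces — equivalently, its average $(d-1)$-degree is bounded away from $2$. Then the modification to the present more general statement is: an arbitrary (not necessarily minimal) core that is not a $d$-gravel contains a minimal core that is not a single $\partial\Delta_{d+1}$, and we run the counting argument on that sub-object.

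The core of the argument (pun intended) is the first-moment bound. Fix the intended size regime $f_d = s$ with $d+2 \le s \le \alpha n^d$, and let $N(s)$ be the expected number of subcomplexes of $Y=\LMc$ that are minimal cores, are not single $\partial\Delta_{d+1}$'s, and have exactly $s$ $d$-faces. A minimal core with $s$ $d$-faces spans some number $v$ of vertices and some number $m$ of $(d-1)$-faces; the crucial inequalities are $m \le \tfrac{d+1}{2} s$ together with the density bound from \cite{ALLM} forcing, in the non-gravel case, $m \le (1-\epsilon)\tfrac{d+1}{2} s$ for some $\epsilon=\epsilon(d)>0$, and the connectivity/minimality bound relating $v$ to $m$ (roughly $m \ge c_d v^{d-1}$, since on $v$ vertices a connected minimal core touching all of them cannot be too sparse). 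Then
\[
N(s) \;\le\; \sum_{v} \binom{n}{v}\binom{\binom{v}{d+1}}{s}\Big(\tfrac{c}{n}\Big)^{s},
\]
and we bound $\binom{n}{v} \le n^v$, $\binom{\binom{v}{d+1}}{s} \le (ev^{d+1}/s)^s$, so each term is at most
\[
n^{v}\Big(\tfrac{ecv^{d+1}}{sn}\Big)^{s}.
\]
Using $v \le (d+1)s$ and, more sharply, the fact that the vertex count is controlled by the $(d-1)$-face count via $v^{d-1} \lesssim m \le (1-\epsilon)\tfrac{d+1}{2}s$, one gets $n^v \le n^{O(s^{1/(d-1)})}$, which is $n^{o(s)}$, so the whole term is dominated by $(Cs^{d}/n)^{s} \cdot n^{o(s)}$ for a constant $C=C(d)$. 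For $s \le \alpha n^{d}$ with $\alpha$ small this is $(C\alpha)^{s(1+o(1))} \cdot (\text{lower-order}) < 1$ and, crucially, $\sum_{s\ge d+2} N(s) \to 0$: the $s=d+2$ term is $\Theta(n^{-1})$ (that is exactly the expected number of extra $\partial\Delta_{d+1}$-like minimal cores, none of which is a single $\partial\Delta_{d+1}$ only if it's glued to something, which forces $v$ smaller than $d+3$ — this boundary case needs a separate by-hand check), and the terms then decay geometrically past a point and are summably small in between. Hence a.a.s.\ $Y$ has no minimal core that is neither a $\partial\Delta_{d+1}$ nor of size $>\alpha n^{d}$; since $R_\infty(Y)$ is a core, if it is not a $d$-gravel it contains such a forbidden minimal core unless $f_d(R_\infty(Y)) > \alpha n^{d}$, which is the claim.

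The step I expect to be the main obstacle is getting the density gain $\epsilon$ honestly — i.e. verifying that a minimal core other than $\partial\Delta_{d+1}$ genuinely has $m \le (1-\epsilon)\tfrac{d+1}{2}s$ (equivalently, a constant fraction of its $(d-1)$-faces have degree $\ge 3$, or it has many vertices of "excess" degree), uniformly in the size $s$. Without this gain the first-moment sum is only $O(1)$ termwise, not $o(1)$, and the argument collapses precisely on the small-$s$ / bounded-$v$ complexes, which is exactly where $\partial\Delta_{d+1}$ and its small relatives live. This is the content imported from Theorem 4.1 of \cite{ALLM}, and the "slight modification" alluded to in the text is presumably the observation that their minimality-based dichotomy localizes: any core that is not a gravel contains a minimal non-$\partial\Delta_{d+1}$ core, so their bound applies verbatim to that piece and the union bound above goes through. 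A secondary nuisance is the careful bookkeeping of the relation between $v$, $m$, and $s$ for disconnected minimal cores (handled by treating each connected component separately, so WLOG the forbidden object is connected), and the explicit small cases $s = d+2, d+3, \dots$ up to a constant, which are finite in number and dispatched by inspection.
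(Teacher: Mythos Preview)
Your overall shape (first-moment union bound over small cores, then cite \cite{ALLM} for the rest) matches the paper, but the quantitative input you propose does not work. The specific gap is the claimed bound $v^{d-1}\lesssim m$ (equivalently $v\lesssim s^{1/(d-1)}$). For $d\ge 3$ this is false: triangulated $d$-spheres and stacked spheres are minimal cores with $v$ \emph{linear} in $s$, not $s^{1/(d-1)}$. With only $v=O(s)$ your term $n^{v}\big(ecv^{d+1}/(sn)\big)^{s}$ does not go to zero; for $v\sim s$ the factor $v^{(d+1)s}/s^{s}\sim s^{ds}$ overwhelms everything once $s$ is even mildly large, and the sum diverges. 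The ``density gain'' $m\le(1-\epsilon)\tfrac{d+1}{2}s$ on $(d-1)$-faces, even if true, does not help here: what you need is a gain on the \emph{vertex} count, and that is a different statement.

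The paper's argument supplies exactly that missing ingredient, but only in a restricted regime, and then splits into two ranges. For $s\le m_1:=(d^3\log n)^d$ it decomposes a core into its ``gravel'' part ($l$ vertex-disjoint $\partial\Delta_{d+1}$'s on vertex set $S$) and $m$ remaining $d$-faces on vertex set $T$, and proves the linear bound $|T|\le \tfrac{d+3}{d+4}\,m$. The structural fact behind this is that any $d$-face containing two vertices of degree exactly $d+1$ must lie inside a $\partial\Delta_{d+1}$, so each non-gravel $d$-face carries at most one such vertex; a short incidence count then gives the constant $\tfrac{d+3}{d+4}<1$. This yields $n^{|T|}(c/n)^{m}\le n^{-m/(d+4)}$, and because $m\le m_1$ is only polylogarithmic, the combinatorial factor for choosing the $m$ $d$-faces on $|S|+|T|\le\mathrm{polylog}(n)$ vertices is $\exp(\mathrm{polylog}(n))$ and is absorbed. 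For $m_1\le s\le \alpha n^{d}$ the paper does not attempt your crude $\binom{\binom{v}{d+1}}{s}$ count at all; it invokes the \cite{ALLM} argument for \emph{connected} cores directly, and then notes that an arbitrary core is a union of connected cores. So the ``slight modification'' is not the localization you describe (pass to a minimal non-$\partial\Delta_{d+1}$ piece), but rather the type-$(l,m)$ gravel/non-gravel decomposition together with the degree-$(d+1)$-vertex lemma to get the sub-unit linear vertex bound.
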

\begin{proof}
Let $m_1:= (d^3\log{n})^d$. Our first goal is to show that every core $d$-subcomplex $C$ of $Y$ with $f_d(C)\le m_1$ is a $d$-gravel. A simple first moment argument yields that $Y$ cannot contain two intersecting copies of $\partial\Delta_{d+1}$, nor can it contain more than $\log\log n$ copies of $\partial\Delta_{d+1}$. A core $d$-complex $C$ that is comprised of exactly $l$ vertex disjoint $\partial\Delta_{d+1}$'s and $m$ additional $d$-faces is said to have {\em type} $(l,m)$.

Let $C$ have type $(l,m)$. We partition its vertex set into $S\dot{\cup}T$, where $S$ is the set of the vertices in some $\partial\Delta_{d+1}$ of $C$. Let $T'\subseteq T$ be those vertices in $T$ of degree $d+1$ in $C$ (i.e., such a vertex is in exactly $d+1$ of $C$'s $d$-faces). Since $C$ is a core, the degree of every vertex in $C$ is at least $d+1$. 

In addition, every $d$-face of $C$ with two or more vertices of degree $d+1$ is included in a $\partial\Delta_{d+1}$. Recall the notion of a {\em link of a vertex} $v$ in a simplicial complex $Y$. Namely, $\mbox{lk}_Y(v)=\{\tau \in Y:v~\dot\cup~ \tau\in Y\}$. In particular, the link of a vertex in a $d$-core is a $(d-1)$-core. Therefore, if a vertex has degree $d+1$ in $C$ then its link is a $\partial\Delta_d$. Suppose that the vertices $v,u\in C$ have degree $d+1$ in $C$ and are contained in a common $d$-face $\sigma=(u,v,x_1,...,x_{d-1})$. Their links are $\partial\Delta_d$'s so there exist vertices $u',v'\notin\sigma $ such that $\mbox{lk}_C(u)=\partial(u',v,x_1,...,x_{d-1})$ and $\mbox{lk}_C(v)=\partial(u,v',x_1,...,x_{d-1})$. This can occur only if $u'=v'$ and the claim follows. 

As a result, every non-gravel $d$-face contains at most one vertex of $T'$, so that $m\ge |T'|(d+1).$
Counting incidences of vertices and $d$-faces in $C$ yields
\[
\big(m+l(d+2)\big)\cdot(d+1) \ge (|S| + |T|)(d+1)+(|T|-|T'|).
\]
But $|S|=l(d+2)$, and a simple manipulation of these inequalities gives $|T| \le m\cdot\frac{d+3}{d+4}.$ We can assume w.l.o.g.\ that $m\le m_1,~l\le \log\log n$ and derive the following upper bound on the number of type $(l,m)$ core $d$-complexes with at most $n$ vertices
\[
n^{l(d+2)+\frac{d+3}{d+4}m}\cdot \left( |S|+|T| \right)^{(d+1)m}= n^{l(d+2)}\cdot\left[ n^{\frac{d+3}{d+4}}\cdot O(\log^{d(d+1)}{n}) \right]^m.
\]
The first term counts the choices for $S,T$, and the second the choice of non-gravel $d$-faces. We conclude that a.a.s.\ $Y$ contains no core of type $(l,m)$ with $l<\log\log n$ and $0<m\le m_1$. This is because any subcomplex of type $(l,m)$ appears in $Y$ with probability $\left(c/n\right)^{l(d+2)+m}$.

The proof of Theorem 4.1  in ~\cite{ALLM} yields a constant $\alpha=\alpha(c,d)$ such that a.a.s. $\LMc$ has no inclusion-minimal subcomplex that is a core with $m_1\le m \le \alpha n^d$ $d$-faces. In fact, their argument only uses the fact that a minimal core $C$ is {\em connected} in the sense that between every two $(d-1)$-faces $\tau,\tau'$ in $C$ there is a path alternating between $(d-1)$-faces and $d$-faces of $C$ with an inclusion relation. However, since every core is a union of connected cores, this means that there are no cores of size $m$ in $Y$.
It follows that the only possible cores that $Y$ can contain have type $(l,0)$, i.e., it is a $d$-gravel.
\end{proof}
\subsection{The core of $\LMc$ - Theorem \ref{thm:main} (II.a)}
The proof of this theorem closely follows the argument of Riordan \cite{rio}. We fix the dimension $d$ and refer to $r(c):=1-t(c,d)$ as a function of $c$. For brevity we denote $r^+(c):= \Psi_2(cr(c)^d)$. Note that both $r(c)$ and $r^+(c)$ are continuous, bounded away from $0$ and increasing when $c>\ccol$. Our main goal is to show that for every $\tilde c>\ccol$ and $\varepsilon>0$, $f_{d-1}(\tilde Y) > (r^+(\tilde c)-\varepsilon)\binom nd$, where $\tilde Y$ is the $d$-core of $\LMvarc{\tilde c}$. 

This is motivated by the fact that $r^+(\tilde c)$ is the probability that the root's degree is $\ge 2$ after every finite number of rooted collapse phases in $T_d(\tilde c)$. In other words, this is the probability that the root survives the non-rooted collapse process. Although this argument is simple and appealing, the actual proof is substantially more involved. Our strategy is to define some carefully crafted property $\A$ of $d$-trees of depth $\log\log n \ll S=S(n) \ll \log n$ such that the following two statements hold a.a.s. First, the subset $A\subset Y_{d-1}$ of $(d-1)$-faces $\tau$ such that $Y$ contains a $\tau$-rooted $d$-tree with property $\A$ is of density at least $(r^+(\tilde c)-\varepsilon)$. Second, for every $\tau\in A$ there exists a $d$-tree $T_{\tau}\subset Y$ in which $\tau$'s degree is at least $2$ and every leaf also belongs to $A$. Consequently, no $(d-1)$-face in $A$ can be collapsed.

We refer throughout the proof to certain properties of rooted $d$-trees $(T,o)$, and occasionally write $T\in\P$ to say that $T$ has property $\P$. Every property $\P$ of rooted $d$-trees induces a property of $(d-1)$-faces in a $d$-complex $Y$. Namely, we say that the $(d-1)$-face {\em $\tau$ has property $\P$} if $Y$ contains a $d$-tree rooted on $\tau$ that has property $\P$. 

Here are some relevant properties: $\D_{\le L}$ means $T$ has depth $\le L$, and $\D_{<\infty}$ means that it has finite depth. Let $\P_1$ and $\P$ be properties of finite (resp. general) $d$-trees. A $d$-tree $(T,o)$ has property $\P_1\circ\P$ when: (i) $T$ has a finite subtree $T'$ rooted at $o$ with property $\P_1$, and (ii) For every leaf $\tau$ of $T'$, the subtree of $T$ rooted at $\tau$ has property $\P$. For example, we consider the property that $T$ has depth $k+1$ and it does not collapse in $k$ phases, i.e., $\B_k:= \{T\in \D_{\le k+1}~|~\delta_k(T) > 0\}$ and note that $\B_k = \B_0\circ\B_{k-1}$. Property $\B$ means that $T$ does not collapse at finite time.

We also define for $k\ge 0$, the properties $\R_k$ which are stronger than $\B_k$ as follows
\[
\R_0 = \{T\in \D_{\le 1}~|~\delta_0(T) \ge 2\}~,~~~\R_k = \R_0\circ\B_{k-1} \cup \B_0\circ\R_{k-1},~k>0.
\]
The difference between $\B_k$ and $\R_k$ is this: $T\in\B_k$ means that $T$ has depth $k+1$ and every non-leaf $(d-1)$-face has at least one descendant $d$-face. In defining $\R_k$ we add the requirement that along every root-to-leaf path we encounter at least one $(d-1)$-face with $\ge 2$ descendant $d$-faces.

Finally, we introduce a stochastic version of $\P$, a property of finite rooted $d$-trees $(T,o)$. For some $0\le p\le 1$ we {\em mark} each leaf of $T$ independently with probability $p$, and remove every $d$-face that contains any unmarked leaf. We say that the event $\M(\P,p,T)$ holds if the remaining $d$-tree has property $\P$. Marking is a convenient way of capturing the following phenomenon: We let each leaf in a finite $d$-tree grow a Poisson $d$-tree and we only ask whether or not this "tail" has some desired property. This is expressed in the simple identity
\begin{equation}
\label{eqn29}
\Pr[\M( \P_1,p,T_{d,k}(c) )]=\Pr[T_d(c)\in \P_1\circ\P]
\end{equation}
where $d,k$ are integers, $c>0$, $\P_1$ is a property of depth-$k$ trees and $\P$ is a property of probability $p$ for Poisson $d$-trees with parameter $c$. For instance,
\[\Pr[\M(\B_k, r(c), T_{d,k+1}(c))]=\Pr[T_d(c)\in\B]=r(c)\]

The following lemma can be viewed as a variation on this identity. It shows that although property $\R_k$ is stronger than $\B_k$, the two are almost equally likely in a Poisson $d$-tree.
\begin{lemma}
\label{lem:RKfrequent}
For every $c>c_1>\ccol$ there is a sufficiently large $k$ such that
\[
\Pr[\M(\R_k, r(c_1), T_{d,k+1}(c))]>r(c_1).
\]
\end{lemma}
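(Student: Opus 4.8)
The plan is to prove the inequality $\Pr[\M(\R_k, r(c_1), T_{d,k+1}(c))] > r(c_1)$ by comparing it to the corresponding quantity for $\B_k$, which by \eqref{eqn29} equals exactly $\Pr[T_d(c)\in\B_0\circ\B = \B] = r(c)$. Since $c > c_1$ we have $r(c) > r(c_1)$ (recall $r$ is strictly increasing above $\ccol$), so there is a fixed positive gap $r(c) - r(c_1)$ to play with. Hence it suffices to show that $\Pr[\M(\R_k, r(c_1), T_{d,k+1}(c))]$ can be made to exceed $r(c_1)$ by choosing $k$ large; in fact I would aim to show $\Pr[\M(\R_k, r(c_1), T_{d,k+1}(c))] \to r(c_1)^{+}$ is false and instead that the limit, call it $\rho$, satisfies $\rho > r(c_1)$. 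The cleanest route is a fixed-point / monotone-limit argument on the recursion $\R_k = \R_0\circ\B_{k-1} \cup \B_0\circ\R_{k-1}$.

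First I would set up the recursion for $\rho_k(p) := \Pr[\M(\R_k, p, T_{d,k+1}(c))]$ with $p = r(c_1)$ fixed. Using the identity \eqref{eqn29} and the definition of $\circ$, the event $\M(\R_k,p,T_{d,k+1}(c))$ unpacks as follows: condition on the root's children (a $\Poi(c)$ number of $d$-faces, each introducing $d$ new $(d-1)$-faces at depth $1$), and for each depth-$1$ $(d-1)$-face recursively ask whether its subtree (a $T_{d,k}(c)$) satisfies $\M(\R_{k-1},p,\cdot)$ or $\M(\B_{k-1},p,\cdot)$. Writing $b := \Pr[T_d(c)\in\B] = r(c)$ for the "$\B$-mass'' of a full Poisson subtree (which by \eqref{eqn29} is the stationary value of the $\B_k$ recursion, using that $c>\ccol$ so $t_k \downarrow t$), and $a_k := \rho_{k-1}(p)$ for the $\R$-mass, the recursion $\R_k = \R_0\circ\B_{k-1}\cup\B_0\circ\R_{k-1}$ translates, via inclusion–exclusion over the disjointified events "some child line achieves an $\R_0$ at the root'' versus "no child does but a deeper $\R$ occurs'', into a relation of the shape $a_{k+1} = F(a_k)$ for an explicit increasing function $F:[0,1]\to[0,1]$ built from the Poisson/ binomial generating functions (e.g. $F$ involves $1 - e^{-c\,(\text{something in } b, a_k)} - c(\cdots)e^{-c(\cdots)}$ capturing "root-degree $\ge 2$ in the marked-and-pruned tree''). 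Because $\B_k \supseteq \R_k$, we have $a_k \le b$ for all $k$; because each marking/pruning step only loses mass, one also checks $a_k$ is monotone (I expect decreasing from $a_0 = \Pr[\M(\R_0,p,T_{d,1}(c))] = \Pr[\Poi(cp^d)\ge 2]$), hence convergent to the largest fixed point $\rho^*$ of $F$ in $[0,b]$.

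The crux is then to show $\rho^* > p = r(c_1)$. Here I would use the characterization of $b = r(c)$ and $p = r(c_1)$ as the (relevant) fixed points of the $\B$-recursion at parameters $c$ and $c_1$ respectively: $1-r(c) = e^{-c\,r(c)^d}$ and $1-r(c_1) = e^{-c_1 r(c_1)^d}$. The point is that the $\R$-recursion $F$ at parameter $c$ is a "thickened'' version of the $\B$-recursion at parameter $c_1$: the extra $\R_0$-requirement (degree $\ge 2$ somewhere along each path) costs essentially nothing once we are in the supercritical regime where typical surviving faces have degree bounded away from $1$, and the gain from running at the larger parameter $c$ rather than $c_1$ strictly dominates that cost. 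Concretely, I would show $F(p) > p$ — i.e. one application of the $\R$-step at parameter $c$ to the value $p$ already strictly increases it — by a direct computation: $F(p)$ is at least the probability that a $\Poi(c p^d)$ variable is $\ge 2$, \emph{plus} a positive correction for the recursive $\B_0\circ\R_{k-1}$ branch, and $\Psi_2(c p^d) = r^+(c) \cdot(\text{stuff})$; the inequality $\Psi_1(c_1 p^d) = 1-e^{-c_1 p^d} = p$ combined with $c > c_1$ and $p$ bounded away from $0,1$ gives $1 - e^{-c p^d} > p$ with room to spare, and subtracting the single linear "$k=1$'' term $c p^d e^{-c p^d}$ still leaves $F(p) > p$ provided the mean $c p^d$ is not too small — which holds since $c > \ccol$ forces $c p^d$ bounded below. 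Then $F(p) > p$ together with monotonicity and continuity of $F$ forces the limit $\rho^* \ge F(p) > p$, and picking $k$ large enough that $a_{k+1} = \rho_k(p)$ is within $\tfrac12(\rho^* - p)$ of $\rho^*$ completes the proof.

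I expect the main obstacle to be the bookkeeping in setting up $F$ correctly: the union in $\R_k = \R_0\circ\B_{k-1}\cup\B_0\circ\R_{k-1}$ is not a disjoint union, and the two composition operations interact with the marking operation $\M(\cdot,p,\cdot)$ in a way that must be unwound carefully using \eqref{eqn29} and the branching independence of Poisson $d$-trees (Lemma~\ref{lem:delta_k_distr}); getting the inclusion–exclusion right, and verifying that the resulting $F$ is genuinely monotone so that the bounded sequence $a_k$ converges to a fixed point rather than oscillating, is where the real work lies. The final numerical inequality $F(p) > p$ is then a short estimate using only $c > c_1 > \ccol$, the defining equation of $r(c_1)$, and the lower bound on $c p^d$ coming from supercriticality.
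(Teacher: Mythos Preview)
Your approach via a fixed-point recursion is substantially different from the paper's, and it has a concrete gap at the key numerical step. The paper proves the lemma by a short coupling: realize the marked tree $T$ from $T_{d,k+1}(c)$, then independently delete each $d$-face with probability $1-c_1/c$; the root's component $T'$ is then distributed as a marked $T_{d,k+1}(c_1)$, so $\Pr[T'\in\B_k]=r(c_1)$ exactly. Since $T'\subseteq T$ forces $T'\in\B_k\Rightarrow T\in\B_k$, one only needs $\Pr[T\in\R_k,\,T'\notin\B_k]>\Pr[T\in\B_k\setminus\R_k,\,T'\in\B_k]$. The latter is at most $(c_1/c)^k$, because $T\in\B_k\setminus\R_k$ means there is a single critical $d$-face at depth $k$ whose removal destroys $\B_k$, and that face must survive all $k$ levels of thinning; the former is bounded below by a $k$-independent constant. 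No recursion or fixed-point analysis enters.

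Your claimed inequality $\Psi_2(cp^d)>p$ with $p=r(c_1)$ is false in general. You correctly note $\Psi_1(cp^d)>p$ since $c>c_1$ and $\Psi_1(c_1p^d)=p$, but after subtracting $cp^de^{-cp^d}$ the inequality becomes, with $u=-\ln(1-p)$ and $\alpha=c/c_1$, the requirement $e^{(\alpha-1)u}>1+\alpha u$, which fails for $\alpha$ near $1$ (the left side is $1+(\alpha-1)u+O((\alpha-1)^2)$, the right side $1+\alpha u$). Concretely, for $d=2$, $c_1=3$, $c=3.01$ one gets $p\approx 0.91$ but $\Psi_2(cp^d)\approx 0.71$. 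The ``positive correction'' from the $\B_0\circ\R_{k-1}$ branch does not immediately rescue this, since it depends on $a_{k-1}$, precisely the quantity you are bounding; moreover the recursion is genuinely two-dimensional (it couples $a_{k-1}$ with $b_{k-1}=\Pr[\M(\B_{k-1},p,T_{d,k}(c))]$, which itself iterates toward $r(c)$), so your one-variable monotone picture does not apply as written. The coupling bypasses all of this by showing directly that the gap between $\R_k$ and $\B_k$ has probability $\le(c_1/c)^k$ on the coupled pair.
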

 
\begin{proof}
Consider the following probabilistic experiment where we randomize thrice. Initially we generate the first $k+1$ generations of $T_d(c)$. Then we do the random marking that yields the $d$-tree $T$. Finally we remove each $d$-face of $T$ independently with probability $c_1/c$. We denote the component of the root by $T'$. Note that $T'$ is distributed like a $T_{d,k+1}(c_1)$ to which random $r(c_1)$-marking is applied. In particular, $\Pr[T'\in\B_k]=r(c_1)$, hence we need to prove that $\Pr[T\in\R_k]>\Pr[T'\in\B_k]$. Since $T'\in\B_k$ implies that $T\in\B_k$ it suffices to show that $\Pr[W]>\Pr[L]$, where
\[
W=[T\in\R_k,~T'\notin\B_k]~~\text{and}~~ L=[T\in\B_k\setminus\R_k,~T'\in\B_k].
\]
To this end we show that $\Pr[L]\to 0$ as $k\to\infty$ whereas $\Pr[W]$ stays bounded away from $0$.

Indeed, if $T\in\B_k\setminus\R_k$, then there exists some $d$-face of depth $k$ whose removal violates property $\B_k$. This $d$-face survives in $T'$ with probability $(c_1/c)^k$, so that $\Pr[L]<(c_1/c)^k$. On the other hand $W$ contains the event that $\delta_0(T)=2$, $T\in \R_0\circ\B_{k-1}$ and $\delta_0(T')=0$ whose probability is positive and independent of $k$.
\end{proof}

A $d$-tree $T$ of depth $L+1$ is {\em $(p,\eta)$-rigid} if $\Pr[\M(\R_L,p,T)]>1-\eta$.
\begin{lemma}
\label{lem:rigid}
For every $c>c_1>\ccol$ and $\eta>0$ and for a large enough integer $L$ there holds
\[
\Pr[T_{d,L+1}(c) \text{~is~} (r(c),\eta)\text{-rigid}] \ge r(c_1).
\]
\end{lemma}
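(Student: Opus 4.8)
The plan is to follow the $k$-core analysis of Riordan \cite{rio}: one produces a structural event of probability at least $r(c_1)$ on which, for $L$ large, $T_{d,L+1}(c)$ is $(r(c),\eta)$-rigid.

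\emph{The coupling.} Generate $T_d(c)$ and keep each of its $d$-faces independently with probability $c_1/c$; the kept subcomplex is distributed as $T_d(c_1)$, and let $\widehat T$ be its root-component. On the event $\mathcal E:=\{\widehat T\in\B\}$ — of probability exactly $r(c_1)$ by Lemma \ref{lem:delta_k_distr} applied with parameter $c_1>\ccol$ and the remark after it — the root lies in the core $\Gamma$ of $\widehat T$, an infinite rooted subtree of $T_d(c)$ in which every $(d-1)$-face has degree $\ge 2$; and, independently, each $(d-1)$-face of $\Gamma$ carries in $T_d(c)$ an extra $\mbox{Poi}(c-c_1)$-distributed family of discarded $d$-faces, each rooting a fresh copy of $T_d(c)$. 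This surplus, available precisely because $c>c_1$, is what will drive the amplification.

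\emph{The recursion.} One lower bounds $\Pr_{\text{mark}}\!\bigl[\M(\R_L,r(c),T_{d,L+1}(c))\bigr]$ on $\mathcal E$. By the identity \eqref{eqn29}, $r(c)$-marking amounts to attaching a Poisson-$c$ tail to each depth-$(L{+}1)$ leaf and asking whether it survives, so the event in question is the existence in $T_d(c)$ of a depth-$(L{+}1)$ subtree of $\R_L$-shape — a $d$-face below every $(d-1)$-face, a degree-$\ge 2$ branch point on every root-to-leaf path — all of whose depth-$(L{+}1)$ leaves root non-collapsing tails. One searches for such a certificate inside $\Gamma$ together with its discarded $d$-faces: since $\Gamma$ is a core the branch points come for free, and it remains to propagate "a sub-certificate with all leaves marked survives" up the tree. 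Writing $b_j(\tau)$ for the probability that $\tau\in\Gamma$ at depth $L{+}1-j$ roots such a $\B_j$-certificate, one obtains $b_0\equiv r(c)$ and a monotone recursion
\[
b_j(\tau)\ \ge\ 1-\bigl(1-\beta_{j-1}^{\,d}\bigr)^{N_\tau},\qquad \beta_{j-1}:=\min_{\tau'}b_{j-1}(\tau'),
\]
where $N_\tau\ge 2$ is the number of usable $d$-faces at $\tau$ — at least the two coming from the core of $\widehat T$ and its $\mbox{Poi}(c-c_1)$ discarded ones, and typically (e.g.\ when $d$ is large) the full $\mbox{Poi}(c_1r(c_1)^d)$ conditioned to be $\ge 2$. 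The surplus $d$-faces provide exactly the slack needed so that the right-hand map has stable fixed point $1$ rather than $r(c)$; hence the iterates increase to $1$, so for $L$ large $b_{L-1}>1-\eta'$ along $\Gamma$ for any prescribed $\eta'$. Branching once at the root then gives $\Pr_{\text{mark}}[\M(\R_L,r(c),T_{d,L+1}(c))]\ge(1-\eta')^{2d}>1-\eta$, i.e.\ $(r(c),\eta)$-rigidity; therefore $\Pr[T_{d,L+1}(c)\text{ is }(r(c),\eta)\text{-rigid}]\ge\Pr[\mathcal E]=r(c_1)$.

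\emph{The obstacle.} The heart of the matter is the amplification. The bare survival recursion $s\mapsto 1-e^{-c s^d}$ has $r(c)$ as a stable fixed point, so one cannot climb past $r(c)$ without genuinely exploiting $c>c_1$; and since the local degrees along $\Gamma$ fluctuate, the monotone recursion must be run uniformly in $\tau$. In particular, for $d$ large the two guaranteed core $d$-faces alone are too weak, and the rare "bare" branches of $\Gamma$ — degree exactly $2$, no surplus — along which the recursion fails to climb must be absorbed by other structure (for instance by averaging the recursion over a bounded window of generations, at the cost of a slightly more careful choice of the conditioning event while preserving its probability $\ge r(c_1)$), all while keeping the $\B_j$/$\R_j$ bookkeeping through the $\circ$ operation consistent. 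This is precisely the step that imports Riordan's $k$-core technique \cite{rio}.
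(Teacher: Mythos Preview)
Your proposal has a genuine gap, and you name it yourself in the final paragraph: the amplification step is the entire content of the lemma, and you do not carry it out. The recursion $\beta_j\ge 1-(1-\beta_{j-1}^d)^{2}$ has an interior unstable fixed point (at $(\sqrt 5-1)/2$ for $d=2$, near $1-1/d^2$ for large $d$); since $r(c)$ for $c$ just above $\ccol$ is roughly $1-1/(d\ln d)$, it sits \emph{below} that fixed point once $d$ is large, and the iteration decreases rather than climbing to $1$. Moreover the premise $N_\tau\ge 2$ is already wrong: the survivor of \emph{rooted} collapse in $\widehat T$ only guarantees that each non-root $(d-1)$-face has at least one descendant $d$-face (degree $\ge 2$ counting the parent), so $\Gamma$ witnesses $\B_L$, not $\R_L$, and ``the branch points come for free'' is false. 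The ``bare branches'' you worry about --- one core child, no discarded surplus --- occur with positive probability at every generation, so no uniform-in-$\tau$ bound avoids them; your suggested fixes (windowed averaging, tweaking the conditioning) are restatements of the difficulty, not solutions to it.

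The paper's proof has a different shape and avoids any iterated recursion. Lemma~\ref{lem:RKfrequent} (which is where the $c_1/c$-thinning coupling is actually used) supplies a fixed $k$ with $\Pr[T_d(c)\in\R_k\circ\B]>r(c_1)$. One then chooses $l$ so that $\Pr[T_{d,L+1}(c)\in\R_k\circ\B_l]$ exceeds its limit $\Pr[T_d(c)\in\R_k\circ\B]$ by at most $\eta^2$, with $L:=k+l+1$. For realized $T$ put $\phi(T):=\Pr[\M(\R_k\circ\B_l,r(c),T)]$: by identity~(\ref{eqn29}) its mean over $T\sim T_{d,L+1}(c)$ is exactly $\Pr[T_d(c)\in\R_k\circ\B]$, it vanishes off $\R_k\circ\B_l$, and is at most $1-\eta$ off $\cL$ since $\R_k\circ\B_l$ implies $\R_L$. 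A one-line Markov comparison then gives $\Pr[(\R_k\circ\B_l)\setminus\cL]\le\eta$, hence $\Pr[\cL]\ge\Pr[\R_k\circ\B_l]-\eta>r(c_1)$. The ``climb to $1$'' you were trying to engineer is replaced by the soft monotone-convergence fact $\R_k\circ\B_l\downarrow\R_k\circ\B$, which needs no control of any fixed-point recursion.
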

\begin{proof}
Below we assume that $k$ is large enough, as required in Lemma \ref{lem:RKfrequent}. We claim that
\begin{equation*}
\label{eqn:17}
\Pr[T_d(c)\in \R_k\circ\B]=\Pr[\M( \R_k,r(c),T_{d,k+1}(c) )]\ge \Pr[\M(\R_k,r(c_1),T_{d,k+1}(c))]>r(c_1).
\end{equation*}
The equality is a special case of identity~(\ref{eqn29}). The next inequality follows by a simple monotonicity consideration and the fact that $r(c)>r(c_1)$. The last inequality comes from Lemma \ref{lem:RKfrequent}.
The condition of $(p,\eta)$-rigidity is stronger the smaller $\eta$ is, so we fix it to satisfy \[ \Pr[T_d(c)\in \R_k\circ\B]-r(c_1)>\eta.\] 

For fixed $k$ the conditions $T_{d,k+l+2}(c)\in \R_k\circ\B_l$ become more strict as $l$ grows and their conjunction over all $l\ge0$ is exactly the condition $T_d(c)\in \R_k\circ\B$. Therefore, we can and will choose $l$ large enough so that
$$\Pr[T_{d,k+l+2}(c)\in \R_k\circ\B_l]\le \Pr[T_d(c)\in\R_k\circ\B]+\eta^2.$$ 

For a $d$-tree $T$ of depth $L+1=k+l+2$, let $\phi(T):=\Pr[\M( \R_k\circ\B_l,r(c),T )]$. We denote by $\cL$ the property that $T$ is $(r(c),\eta)$-rigid.
The expectation of $\phi(T)$, where $T$ is $T_{d,L+1}(c)$ distributed, equals the probability $\Pr[T_d(c)\in\R_k\circ\B]$. In addition, $\phi(T)=0$ if $T\notin\R_k\circ\B_l$ and $\phi(T)\le 1-\eta$ if $T\notin\cL$ since $\R_k\circ\B_l$ implies $\R_L$. Therefore
\[
\Pr[T_d(c)\in\R_k\circ\B] \le \Pr[T_{d,L+1}(c)\in(\R_k\circ\B_l)\cap\cL]+(1-\eta)\Pr[T_{d,L+1}(c)\in(\R_k\circ\B_l)\setminus\cL],
\]
whence
\[
\eta\cdot\Pr[T_{d,L+1}(c)\in\R_k\circ\B_l\setminus\cL] \le \Pr[T_{d,L+1}(c)\in\R_k\circ\B_l]-\Pr[T_d(c)\in\R_k\circ\B].
\]

Putting everything together we conclude that $\Pr[T_{d,L+1}(c)\in\cL]\ge\Pr[T_{d,L+1}(c)\in\R_k\circ\B_l]-\eta>r(c_1)$, as stated.

\end{proof}

We set all the parameters that appear in the discussion below. Recall that our goal is to show that for every $\tilde c>\ccol$ and $\varepsilon>0$, $f_{d-1}(\tilde Y) > (r^+(\tilde c)-\varepsilon)\binom nd$. Let $\ccol<c_1<c<\tilde c$ such that $r^+(c_1)\ge r^+(\tilde c)-\varepsilon/2$ and $\Psi_1(\tilde c r(c_1)^d)>r(c)$. Again we choose $k$ large enough to make Lemma \ref{lem:RKfrequent} hold, and we fix some $0<\eta < d^{-2(k+1)}/8$. Also $L$ is so large that Lemma \ref{lem:rigid} holds. Recall that $\cL$ denotes the property $(r(c),\eta)$-rigidity of $d$-trees of depth $L+1$. 

Consider an integer $S=S(n)$ such that $\log\log n \ll S \ll \log n$, and we define for $0\le s\le S$, the properties $\A_s$ by
\[
\A_0 = \cL~,~~~\A_s = \R_k\circ\A_{s-1},~0<s\le S.
\]
Note that $\A_S$ depends on the first $Q:=(k+1)S+L+1$ generations. Finally, we define two key properties:
\[
\A = \R_0 \circ \D_{\le L}\circ \A_S~,~~~ \P=\R_0\circ\D_{<\infty}\circ\A.
\]
In words, $T\in\A$ means that $T$ has a rooted subtree of depth $\le L+1$ in which the root's degree is $\ge 2$ and the subtree of $T$ rooted at each of its leaves has property $\A_S$. $T\in\P$ means that $T$ has a finite rooted subtree in which the root's degree is $\ge 2$ and the subtree of $T$ rooted at each of its leaves has property $\A$.

It follows by induction that $\Pr[T_{d,(k+1)s+L+1}\in\A_s] > r(c_1)$ for every $s\ge 0$. Indeed, Lemma \ref{lem:rigid} yields the case $s=0$, and the inductive step follows from Lemma \ref{lem:RKfrequent}. Also, since $\R_0\circ\A_S$ implies $\A$, the probability that $T_d(c)$ has a rooted subtree that satisfies $\A$ is at least
\begin{equation}
\label{eqn:pr_of_A}
 \Pr[T_{d,Q+1}(c)\in\R_0\circ\A_S] \ge\Psi_2(cr(c_1)^d)\ge\Psi_2(c_1r(c_1)^d)=r^+(c_1).
\end{equation}
Indeed, the probability that in a $d$-face which contains the root all the subtrees rooted at a $(d-1)$-face at depth $1$ has $\A_S$ is at least $r(c_1)^d$. Hence, the number of such $d$-faces is Poi$(cr(c_1)^d)$ distributed.

We come to the most significant step in the proof: We show, by a first-moment argument, that a.a.s.\ every $(d-1)$-face in $Y=\LMvarc{\tilde c}$ that has property $\A$ also has $\P$. This implies that every such $(d-1)$-face $\tau$ is contained in a $d$-tree $T_\tau \subset Y$ in which (i) $\tau$ is of degree $\ge 2$, and (ii) every leaf of $T_\tau$ has property $\A$. Consequently, the union of these $d$-trees $\{T_\tau~|~\tau\mbox{~has property~}\A\}$ is contained in the core $\tilde Y$.

\begin{claim}
\label{clm:struc_As}
For every $s\ge 0$, and every $T\in\A_s$, 
\[
\Pr[\M(\B_{(k+1)s}\circ\R_{L},r(c),T)]\ge 1 - \frac{2^ {-2^s}}{4d^{2(k+1)}}.
\]
\end{claim}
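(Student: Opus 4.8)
The plan is to prove Claim~\ref{clm:struc_As} by induction on $s$, peeling off one $\R_k$ layer at a time through the recursion $\A_s=\R_k\circ\A_{s-1}$ and cashing in the $2$-redundancy built into $\R_k$: along every root-to-leaf path of an $\R_k$-tree there sits a $(d-1)$-face carrying at least two descendant $d$-faces. It is convenient to set $q_s:=\frac{2^{-2^s}}{4d^{2(k+1)}}$, so that $q_0=\frac{1}{8d^{2(k+1)}}$ and $q_s=4d^{2(k+1)}q_{s-1}^2$ for $s\ge 1$, and to aim for a marking-success probability of at least $1-q_s$. The base case $s=0$ needs no work: $T\in\A_0=\cL$ is by definition the assertion that $T$ is $(r(c),\eta)$-rigid, i.e.\ $\Pr[\M(\R_L,r(c),T)]>1-\eta$, and since $\eta<d^{-2(k+1)}/8=q_0$ we are done.

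For the inductive step I would fix $T\in\A_s=\R_k\circ\A_{s-1}$ and first replace the abstract membership $T\in\R_k$ by a concrete, bounded witness. Unwinding $\R_k=\R_0\circ\B_{k-1}\cup\B_0\circ\R_{k-1}$ by a secondary induction on $k$, one locates inside the first $k+1$ generations of $T$ a subtree $W$ in which some $(d-1)$-face $\rho$ carries two descendant $d$-faces $\sigma_1,\sigma_2$, and below each $(d-1)$-face of $\sigma_1$ and of $\sigma_2$ other than $\rho$ there hangs a $\B_{k-1}$- (or $\R_{k-1}$-) witness every leaf of which is, in $T$, the root of an $\A_{s-1}$-subtree. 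The number of such ``exit'' leaves is maximized when $\rho$ is the root of $T$; then $W$ splits into two halves --- the parts below $\sigma_1$ and below $\sigma_2$ --- with at most $2d^{k+1}$ exit leaves each. For an exit leaf $\ell$ write $T^{(\ell)}$ for the $\A_{s-1}$-subtree of $T$ rooted at $\ell$, and let $E_\ell$ be the event that the $d$-tree obtained from $T^{(\ell)}$ by marking its leaves independently with probability $r(c)$ and deleting every $d$-face that meets an unmarked leaf has property $\B_{(k+1)(s-1)}\circ\R_L$. By the inductive hypothesis $\Pr[E_\ell]\ge 1-q_{s-1}$, and the events $E_\ell$ are mutually independent, since they depend on disjoint subtrees and disjoint marks.

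The heart of the matter is that if every exit leaf of one half --- say the half below $\sigma_1$ --- has $E_\ell$ occur, then the marked and trimmed tree does have property $\B_{(k+1)s}\circ\R_L$: the root together with the single $d$-face $\sigma_1$ is a $\B_0$-structure, composing it with the $\B_{k-1}$-witnesses below the $(d-1)$-faces of $\sigma_1$ produces, via $\B_0\circ\B_{k-1}=\B_k$, a $\B_k$-structure each of whose leaves carries (by the $E_\ell$'s) a $\B_{(k+1)(s-1)}\circ\R_L$-structure, and since the $d$-faces of $W$ lie in the first $k+1$ generations the trimming does not touch them; one more application of $\B_a\circ\B_b=\B_{a+b+1}$ then collapses $\B_k\circ\B_{(k+1)(s-1)}\circ\R_L$ to $\B_{(k+1)s}\circ\R_L$. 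Hence the marking can fail only when each of the two halves has an exit leaf at which $E_\ell$ is violated; these two events are independent and each has probability at most $2d^{k+1}q_{s-1}$ by a union bound over the at most $2d^{k+1}$ exit leaves of a half, so
\[
\Pr[\,\text{not }\M(\B_{(k+1)s}\circ\R_L,r(c),T)\,]\ \le\ (2d^{k+1}q_{s-1})^2\ =\ 4d^{2(k+1)}q_{s-1}^2\ =\ q_s,
\]
which closes the induction. When $\rho$ is not the root, or the children of $\rho$ already carry $\R_{k-1}$- rather than $\B_{k-1}$-witnesses, the two halves are only smaller and the same bound applies.

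The hard part will be the witness-extraction step: converting the membership $T\in\R_k$ into a bounded structure with two genuinely independent redundant halves, each anchored on $\A_{s-1}$-subtrees, while keeping the number of exit leaves under control --- at most $2d^{k+1}$ per half --- so that the recursion closes to exactly $q_s=\frac{2^{-2^s}}{4d^{2(k+1)}}$. This is where one needs the secondary induction on $k$ unwinding the two clauses of $\R_k$, together with the verification that seating the redundant $2$-face at the root is the worst case for the count. The base case, the composition identities $\B_0\circ\B_{k-1}=\B_k$ and (by iteration) $\B_a\circ\B_b=\B_{a+b+1}$, and the arithmetic of the double-exponential recursion for $q_s$ are all routine once $q_s$ has been identified.
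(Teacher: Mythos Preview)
Your induction scheme and the base case are fine, and the arithmetic $q_s=4d^{2(k+1)}q_{s-1}^2$ is correct. The gap is in the witness-extraction step, precisely where you flag it as ``the hard part'' --- but the specific plan you outline does not work.

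The problem is the $\B_0\circ\R_{k-1}$ clause. There the root has a \emph{single} $d$-face $\sigma$, whose $d$ children $\tau_1,\ldots,\tau_d$ each carry an $\R_{k-1}$ subtree. Your procedure locates a branching face $\rho$ inside one of these, say below $\tau_1$, and calls the subtrees below $\rho$'s two children the ``two halves''. But for the root to survive $k$ collapse phases, the face $\sigma$ must survive, and that requires \emph{every} $\tau_i$ to retain positive degree --- not just $\tau_1$. Your sentence ``the root together with the single $d$-face $\sigma_1$ is a $\B_0$-structure'' is only meaningful when $\rho$ is the root; when $\rho$ sits deeper, $\sigma_1$ is not incident to the root at all, and the exit leaves below $\tau_2,\ldots,\tau_d$ lie entirely outside both halves. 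So ``one half all good'' does not imply $\B_k$, and the bound $(2d^{k+1}q_{s-1})^2$ does not follow. The closing remark that ``the two halves are only smaller'' misses the point: it is not the size of the halves but the existence of necessary leaves outside them that breaks the argument.

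The paper sidesteps this entirely. Rather than locating a single $\rho$, it takes a \emph{minimal} $T'\in\R_k$ (which has exactly $2d^{k+1}$ leaves) and uses the global property that along \emph{every} root-to-leaf path there is a branching face. From this one checks directly (by induction on $k$ through both clauses of $\R_k$) that deleting any one $d$-face at the bottom level still leaves a $\B_k$ subtree. Hence failure of $\M(\B_k\circ\cdots)$ forces at least two bad leaves among the $2d^{k+1}$, and the bound $\binom{2d^{k+1}}{2}q_s^2\le q_{s+1}$ closes the induction. This ``one deletion is harmless'' formulation handles both clauses of $\R_k$ uniformly, with no need to split into halves.
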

\begin{proof}
By induction on $s$. Our definition of $\cL$ and the choice of $\eta$ yield the case $s=0$. Let $s\ge0$ be an integer, and $T\in\A_{s+1} = \R_k\circ\A_{s}$. Let $T'\subset T$ be a {\em minimal} $d$-tree of depth $k+1$ such that $T'\in\R_k$ and every subtree rooted at its leaves has the property $\A_{s}$. By a straightforward computation, $T'$ has exactly $2d^{k+1}$ leaves. By induction, after the marking process the subtree rooted at every leaf of $T'$ fails to have property $\B_{(k+1)s}\circ\R_{L}$ independently with probability at most $\frac{2^ {-2^s}}{4d^{2(k+1)}}$. Let us refer to such a leaf as {\em bad}, and remove from $T'$ every $d$-face that contains a bad leaf. Since initially $T'$ had the property $\R_k$, it now has property $\B_k$ unless at least $2$ $d$-faces were removed. But this can only occur if at least $2$ leaves are bad, an event of probability at most
\[
\binom{2d^{k+1}}2\left(\frac{2^ {-2^s}}{4d^{2(k+1)}}\right)^2 \le \frac{2^{-2^{s+1}}}{4d^{2(k+1)}}.
\]
Namely, the tree $T$ has the property $\B_k\circ\B_{(k+1)s}\circ\R_{L}=\B_{(k+1)(s+1)}\circ\R_{L}$  with the desired probability. 
\end{proof}
This leads to the following key lemma.
\begin{lemma}
\label{lem:goodisprefect}
For every fixed $(d-1)$-face $\tau$ in $Y=\LMvarc{\tilde c}$, the probability that $\tau$ has property $\A$ but does not have $\P$ is $o(n^{-d})$.
\end{lemma}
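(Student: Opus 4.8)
The plan is to run a first-moment (union bound) argument over the possible "witnessing" structures. The property $\A$ of $\tau$ says that $Y$ contains a $\tau$-rooted $d$-tree $T$ of depth $\le L+1$ with $\delta_0(T)\ge 2$, such that the subtree rooted at each leaf of $T$ has property $\A_S$ — this is a bounded-depth ($Q+1$ generations, with $Q=(k+1)S+L+1\ll\log n$) structure, so a fixed $\tau$ has it with probability $\Theta(1)$. The property $\P$ of $\tau$ asks for a finite $\tau$-rooted $d$-tree with $\delta_0\ge 2$ all of whose leaves have property $\A$. So if $\tau$ has $\A$ but not $\P$, something has to go wrong: I would fix a minimal witness $T_0$ for $\A$, expose the leaves of $T_0$, and then try to grow each leaf using the structure of $\A_S$ — recall $\A_S=\R_k\circ\A_{S-1}=\cdots$ is a tower of $\R_k$-extensions ending at $\A_0=\cL$, i.e.\ at $(r(c),\eta)$-rigidity of a depth-$(L+1)$ tree. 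The point is that Claim~\ref{clm:struc_As}, applied at a leaf with $T\in\A_S$, says that if each of that leaf's own leaves (one more layer down) independently "succeeds" — i.e.\ grows a Poisson tail with property $\B$, which by Lemma~\ref{lem:delta_k_distr} happens with probability exactly $r(c)$ per leaf — then with probability $\ge 1-2^{-2^S}/(4d^{2(k+1)})$ the leaf itself grows a witness for $\B_{(k+1)S}\circ\R_L$, i.e.\ a non-collapsing infinite subtree whose root has degree $\ge2$ somewhere along, which is (a fortiori) a witness for $\A$ being inheritable. So "having $\A$ but not $\P$" forces a failure of this percolation-type event, whose probability is exponentially small in $S$, times a polynomial (in $n$) count of the number of places the failure can be localized; since $S\gg\log\log n$ this beats $n^{-d}$.

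**More precisely**, the first-moment computation I would set up is: a fixed $\tau$ has $\A$ witnessed by some $T_0$ living in the first $Q+1$ generations; the number of such $T_0$ inside $Y=\LMvarc{\tilde c}$ that could witness $\A$ is at most $(\text{polylog } n)^{O_{d,k,S,L}(1)}$ because all degrees in $Y$ are a.a.s.\ $\le(d+1)\log n$ (from the proof of Lemma~\ref{lem:loc_app}), and a bounded-depth $d$-tree with all degrees polylogarithmic has polylogarithmically-many faces; crucially $Q+1=O(S)=o(\log n/\log\log n)$, so even $(\log n)^{Q+1}$ is $n^{o(1)}$. For each fixed leaf $\ell$ of a fixed $T_0$, whether the subtree of $Y$ rooted at $\ell$ supports property $\A$ is — up to the local-weak-convergence error of Lemma~\ref{lem:loc_app} — governed by the Poisson $d$-tree law, and the conditional probability that $\ell$ carries $\A_S$ but fails to carry the stronger "$\A$-inheritable" structure is bounded, via Claim~\ref{clm:struc_As} and the marking identity~(\ref{eqn29}), by $O(2^{-2^S}) = n^{-\omega(1)}$. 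Summing over the $n^{o(1)}$ choices of $(\tau, T_0, \ell)$ still leaves $n^{-\omega(1)}=o(n^{-d})$, which is the claimed bound. The chaining of the $S$ layers of $\A_s$ is exactly what Claim~\ref{clm:struc_As} packages — it converts "$T\in\A_S$" plus "all $(\ge 2d^{(k+1)S})$ bottom leaves succeed independently with prob.\ $r(c)$" into "$T$ grows a $\B$-type witness except with prob.\ $2^{-2^S}/(4d^{2(k+1)})$" — so the core of the lemma is really just this claim plus a careful bookkeeping of what "fixed $(d-1)$-face" vs.\ "subtree grown in $Y$" means.

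**The main obstacle** is the mismatch between the clean Poisson-$d$-tree statements (Claim~\ref{clm:struc_As}, identity~(\ref{eqn29})) and the random complex $Y$: in $Y$ the subtrees hanging off different leaves of $T_0$ are not exactly independent Poisson $d$-trees, and a leaf's tail can "come back" and touch another part of the structure, destroying the tree picture. I would handle this the same way Lemma~\ref{lem:loc_app} does — the relevant neighborhoods have radius $Q+1+(\text{the depth probed inside }\A)$, but since $\A$ only ever needs to be witnessed by a finite subtree, and a.a.s.\ all degrees are $\le(d+1)\log n$, the probability of any collision within the bounded radius used in the first-moment count is $O(\text{polylog}(n)/n)$, negligible against everything; and conditioned on no collision, the tails really are i.i.d.\ $\mbox{Bin}(n-o(n),\tilde c/n)\to\mbox{Poi}(\tilde c)$ trees, so the Poisson estimates apply with a $(1+o(1))$ factor. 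The second subtlety is that the $\B$-type witness produced inside a leaf's tail is a priori infinite, whereas $\P$ needs a \emph{finite} subtree with all leaves in $\A$; but this is exactly the role of the $\R_0\circ\D_{<\infty}\circ\A$ form of $\P$ — one truncates each non-collapsing branch at the first depth where a "fresh" copy of $\A$ reappears, which by the supercritical behaviour of $(t_k)$ (i.e.\ $t_k\to t<1$ when $\tilde c>\ccol$, noted at the start of Section~\ref{sec:col}) happens at a.a.s.\ finite depth along every branch. So the write-up is: (1) bound degrees and rule out collisions; (2) count witnesses for $\A$, getting $n^{o(1)}$; (3) for each, bound the failure probability by Claim~\ref{clm:struc_As} applied to each leaf, getting $2^{-2^{\Omega(S)}}$; (4) multiply and use $S\gg\log\log n$.
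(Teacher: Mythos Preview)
Your skeleton is right and matches the paper's: invoke Claim~\ref{clm:struc_As} so that failure has probability $O(2^{-2^S})=o(n^{-d})$ since $S\gg\log\log n$, and handle the coupling to $Y$ by growing fresh subtrees on vertices not yet used. The union bound over all witnesses $T_0$ is not needed --- the paper simply fixes one witness and uses that the bound in Claim~\ref{clm:struc_As} is uniform over all $T\in\A_s$ --- but it does no real harm.

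The genuine gap is in what you mark by and how you then recover $\P$. You propose to mark each deep leaf $\rho$ by the event that its fresh tail lies in $\B$; this does give marking probability $r(\tilde c)>r(c)$ and makes Claim~\ref{clm:struc_As} applicable, but the output is then only $\B_{(k+1)S}\circ\R_L\circ\B$ at each leaf $\pi$ of $T'$, and a $\B$-tail has no reason to contain a copy of $\A$ within bounded depth with probability $1-o(n^{-d})$. Your truncation argument (``supercritical behaviour of $(t_k)$, so $\A$ reappears at a.s.\ finite depth'') is an almost-sure statement, not the quantitative bound required here. The paper's device is to mark instead by the \emph{finite-depth, self-referential} event $\tilde T_\rho\in\B_0\circ\A_S$: the parameter choice $\Psi_1(\tilde c\,r(c_1)^d)>r(c)$ set up just before Claim~\ref{clm:struc_As} was arranged precisely so that this event has probability exceeding $r(c)$, and after applying the claim one obtains $\B_{(k+1)S}\circ\R_L\circ\B_0\circ\A_S$ at each $\pi$. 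The loop then closes via the purely definitional step $\R_L\circ\B_0\circ\A_S\Rightarrow\D_{\le L}\circ\R_0\circ\D_{\le L}\circ\A_S=\D_{\le L}\circ\A$, so every leaf of $T'$ carries $\D_{<\infty}\circ\A$ and hence $\tau$ has $\P$. You are missing both this choice of marking target and the $\R_L$-extraction step that makes the finite witness for $\P$ fall out automatically.
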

\begin{proof}
It is easy to show that with probability $o(n^{-d})$ the $(1+L+2Q)$-neighborhood of $\tau$ consists of at most $n^{1/3}$ vertices, and we condition on this event. Suppose that $\tau$ has the property $\A$, and consider $\tau\in T\subset Y$ such that $T\in\A$. In particular, there exists a $d$-tree $T'\subset T$ rooted at $\tau$ of depth at most $L+1$ such that $d_{T'}(\tau)= 2$ and every subtree $T_{\pi}''\subset T$ rooted at a leaf $\pi$ of $T'$ has property $\A_S$. Denote by $X\subset Y_{d-1}$ the union of the leaves of $T_{\pi}''$ over all the leaves $\pi$ of $T'$.

We now expose an additional subset of the $(Q+1)$-neighborhoods of the $(d-1)$-faces of $X$ with the following precaution. When we reach some $(d-1)$-face $\rho$ and query whether a $d$-face that contains it belongs to $Y$, we only consider $d$-faces of the form $v\rho$ where $v$ is a vertex that does not belong to $T$ nor did it appear in the exposing process upto the current query. In this manner, every $\rho\in X$ is the root of a $d$-tree $\tilde T_{\rho}\subset Y$ in which every $(d-1)$-face has at least $\mbox{Bin}(n-n^{1/3},\frac{\tilde c}n)$ descendants. Therefore, the probability that $\tilde T_{\rho} \in \B_0\circ\A_S$ is at least
\[
\Pr[T_{d,Q}(\tilde c) \in\B_0\circ\A_S]-o(1) \ge \Psi_1(\tilde cr(c_1)^d) - o(1) > r(c),
\]
and these events are independent over $\rho\in X$. Therefore we can consider the event $\tilde T_{\rho} \in \B_0\circ\A_S$ as an alternative for marking the leaves of $d$-trees $T_{\pi}''$ and plug it in Claim \ref{clm:struc_As}. Since the number of leaves of $T'$ is bounded, the claim implies that with probability $1-O\left(2^{-2^S}\right)=1-o(n^{-d})$, after the described exposure of the additional neighborhoods, all the subtrees rooted in these leaves have the property $\B_{(k+1)S}\circ\R_L\circ\B_0\circ\A_S$. But recall that $\R_L$ means that in every path from the root of the $d$-tree to a $(d-1)$-face of depth $L+1$, there is a $(d-1)$-face with at least two descendants. In other words, $\R_L\circ\B_0\circ\A_S$ implies $\D_{\le L}\circ \R_0\circ\D_{\le L}\circ\A_S=\D_{\le L}\circ\A$. In particular, all the leaves of $T'$ have the property $\D_{<\infty}\circ\A$, and since $d_{T'}(\tau)= 2$, it follows that $\tau$ has the property $\P$.
\end{proof}

We are now ready to prove the theorem.
\begin{proof}[Proof of Theorem \ref{thm:main} (II.a)]
Let $N_{\A}$ denote the number of $(d-1)$-faces in $Y=\LMvarc{\tilde c}$ that have property $\A$. By the previous discussion, we know that $f_{d-1}(\tilde Y) \ge N_{\A}$. 
We approximate the expectation of $N_{\A}$ by $\mbox{Pr}_{\tilde c}[\A]$, the probability that $T_d(\tilde c)$ has a rooted subtree that satisfies $\A$,
\[
\E[N_{\A}] =(\mbox{Pr}_{\tilde c}[\A]+o(1))\binom nd\ge r^+(c_1)\binom nd\ge(r^+(\tilde c)-\varepsilon/2)\binom nd
\]
by Equation (\ref{eqn:pr_of_A}). Since $\A$ depends only on the $O(S)$-neighborhood of the $(d-1)$-face, and two $(d-1)$-faces have non-disjoint neighborhoods with negligible probability, it follows that $\E[N_{\A}^2]=\E[N_{\A}]^2(1+o(1)).$ By the second moment method $\Pr[N_{\A} < (r^+(\tilde c)- \varepsilon)\binom nd] = o(1)$. 
The upper bound is much simpler. Let $N_k$ denote the number of $(d-1)$-faces that survive (= did not collapse nor become free) the first $k$ phases of collapse. Clearly, $f_{d-1}(\tilde Y)\le N_k$ for every $k$. Similarly to $N_{\A}$, this property depends on the $k$-neighborhood of a $(d-1)$-face and by the same argument as before, $N_k$ is concentrated around its expectation. The expectation of $N_k$ can be bounded by the Poisson $d$-tree as follows.
\[
\E[N_k] \le (\mbox{Pr}_{\tilde c}[\delta_{k-1}\ge 2]+o(1))\binom nd= 
(\Psi_2(\tilde c (1-t_{k-2})^d )+o(1)) \binom nd,
\]
and since $t_k\to t$, we obtain that $\Psi_2(\tilde c (1-t_{k-2})^d)$ tends to $r^+(\tilde c)$ as $k\to\infty$. 

We turn to prove that a.a.s.\ the number of $d$-faces in the core $f_d(\tilde Y) = r(\tilde c)^{d+1}\frac {\tilde c}n\binom n{d+1}(1+o(1))$. Let $M_{\A}$ denote the number of $d$-faces in $Y$ all of whose $(d-1)$-faces have property $\A$. Clearly, $f_d(\tilde Y)\ge M_{\A}$ since no $(d-1)$-face with property $\A$ is collapsed. In addition, since this is a local property it suffices, as before, to compute the expectation of $M_{\A}$. The probability that a $d$-simplex $\sigma$ belongs to $Y$ is $\frac{\tilde c}n$, and we can expose a subset $T$ of its neighborhood in the same careful fashion as done in the proof of Lemma \ref{lem:goodisprefect}. 
The probability that all the $d$-trees growing from $\sigma$'s $(d-1)$-faces have property $\A_S$ is at least $\Pr_{\tilde{c}}[\A_S]^{d+1} - o(1)> r(c_1)^{d+1}$. If this occurs, then every $(d-1)$-face $\tau\subset\sigma$ has property $\A$ by letting $\tau$ be the root of the $d$-tree $T\cup\{\sigma\}$. Consequently $\E[M_{\A}] \ge r(c_1)^{d+1}\frac {\tilde c}n\binom n{d+1}.$ The upper bound is proved similarly, by showing that the probability that a $d$-face survives the first $k$ collapse phases tends to $r(\tilde c)^{d+1}$ as $k$ grows.
\end{proof}

\subsection{C-shadow of $\LMc$}
\label{subsec:C-shad}
Here we prove the parts of Theorem \ref{thm:randShadow} that deal with the $C$-shadow of $\LMc$. Namely, we show that for $c<\ccol$, the C-shadow of $Y=\LMc$ has size $\Theta(n)$, and for $c>\ccol$ its size is
\[
|\SH_C(Y)| = \binom n{d+1}((1-t)^{d+1}+o(1)).
\]
Both statements follow directly from the previous proofs. Regarding the range $c<\ccol$, a simple second moment calculation shows that a.a.s.\ there are $\Theta(n)$ sets of $d+2$ vertices in $Y$ that span all but one of the $d$-faces in the boundary of a $(d+1)$-simplex. The missing $d$-face in every such configuration is obviously in the C-shadow. On the other hand, if the C-shadow is large, viz., $|\SH_C(Y)|\gg n$, then for every $c<c'<\ccol$, with probability bounded away from zero, the core of $\LMvarc{c'}$ contains a complex that is not the boundary of $(d+1)$-simplex. But this contradicts Theorem \ref{thm:main}(I).

We prove the supercritical case $c>\ccol$ in much the same way that we calculated the number of $d$-faces in the core. Namely, for the lower bound we count $d$-simplices not in $Y$ all of whose $(d-1)$-faces have property $\A$. For the upper bound we count $d$-simplices that if added into $Y$ do not survive $k$ phases of collapse. As before, both properties are local and by a second moment argument are concentrated around their means, which are computed by Poisson $d$-tree approximations.

\section{$d$-acyclicity}
In the previous section we saw that the threshold $\ccol$ for $d$-collapsibility in $\LMc$ coincides with the threshold in which rooted collapsibility in $T_d(c)$ almost surely eliminates all the $d$-faces containing the root. In the case of $d$-acyclicity, the correspondence is similar but more intricate. In fact, the threshold $\cacy$ for $d$-acyclicity coincides with {\em two} seemingly separate thresholds of $T_d(c)$'s parameters. These will used to bound the $d$-acyclicity threshold from below and above respectively. Since both occur at $c=\cacy$ it follows that these bounds are tight. Furthermore, if $c>\cacy$, these two parameters yield upper and lower bounds for $\beta_d(Y,\mR)$ which are tight upto small order error terms. Finally, the tight estimation for $\beta_d(Y,\mR)$ allows us to compute the density of the {\em shadow} of $Y$.

If a $d$-complex $Y$ has more $d$-faces than $(d-1)$-faces, then $\beta_d(Y,\mR)\ge f_d(Y)-f_{d-1}(Y)>0$. For $Y=\LMc$, this happens only when $c>d+1$, but we can say a bit more. Even though $Y$ and its core $\tilde Y$ have the same $d$-th Betti number, it turns out that there is a wider range of the parameter $c$ for which $\tilde Y$ has more $d$-faces than $(d-1)$-faces. In fact, one can show that $f_d(\tilde Y)>f_{d-1}(\tilde Y)$ if and only if $c>\cacy$, using the expressions for these face numbers in Theorem \ref{thm:main}(II.a). However, it is significantly easier to prove the same lower bound on $\beta_d(Y)$ by analyzing $T_d(c)$ as follows. Let $S_k(Y)$ be obtained by removing all exposed $(d-1)$-faces in $R_k(Y)$. The average degree of the $(d-1)$ faces in $S_k(Y)$ is approximated using the conditional expectation
$$
\E[\delta_k ~|~ \delta_k >0~\wedge~\delta_{k-1}>1].
$$
In words, this is the expected degree of the root of $T_d(c)$  after $k$ phases of rooted collapses, conditioned on the fact that its degree remains strictly greater than $1$ throughout the collapse process. We claim that this captures the average degree of $(d-1)$-faces in $S_k(Y)$. A $(d-1)$-face $\tau$ of $Y$ belongs to $S_k(Y)$ if and only if its degree after $k-1$ phases of $\tau$-rooted collapse is $>1$ and stays positive after one more phase. Indeed, as long as $d_\tau>1$ the $\tau$-rooted collapse and non-rooted collapse are identical. A difference occurs when $d_\tau = 1$, at which point the rooted collapse continues as usual, but the non-rooted collapse eliminates $\tau$. If the average degree of $(d-1)$-faces exceeds $d+1$, this yields, via a simple double-counting argument, a positive lower bound for $\beta_d(Y)$. A simple calculus exercise then shows that this condition holds if and only if $c>\cacy$. Namely,
\[
\lim_{k\to\infty}\E[\delta_k ~|~ \delta_k >0~\wedge~\delta_{k-1}>1] > d+1 ~~\iff ~~ c > \cacy.
\]

The most substantial role of local weak convergence is in proving the lower bound on the $d$-acyclicity threshold. We analyze $T=T_d(c)$ using tools from spectral theory and functional analysis. For this reason we are still unable to resolve this question over finite fields of coefficients. As further detailed below, we define $x_T:=\pi_{L(T),e_o}(\{0\})$ to be the measure of the atom $\{0\}$ according to the {\em spectral measure $\pi$} of the {\em Laplacian $L(T)$} of $T$ with respect to the characteristic vector $e_o$ of the root $o$. Local weak convergence implies that {\em $x_T$ is an upper bound on the normalized dimension} of the left kernel $Z$ of $\partial_d(Y)$. Note that if $\LMc$ is $d$-acyclic then $\dim Z$ equals $(1+o(1)){n\choose d}\left(1-\frac c{d+1}\right)$, and otherwise it is greater. Indeed, the proof shows that 
\[
\E_{T\sim T_d(c)}[x_T] = 1 - \frac{c}{d+1}~~\iff~~c<\cacy,
\]
and if $c>\cacy$, this expectation is greater than $1 - \frac{c}{d+1}.$

\label{sec:acy}
\subsection{Acyclicity beyond collapsibility  - Theorem \ref{thm:main}(II.b)}
To prove that a random complex is $d$-acyclic beyond the $d$-collapsibility threshold, we cannot restrict ourselves to purely combinatorial arguments. It is not a-priori clear that the local weak limit of a random complex holds enough information to prove such a statement. Surprisingly, perhaps, this is the case when we work over $\mathbb R$. In this section we describe the main ingredients of this method, which appears in ~\cite{LP}, where complete proofs can be found.

Let $Y=\LMc$. The primary goal in the proof is to find a tight upper bound for $\lim_{n\to\infty}\frac{1}{{n\choose d}}\E[\beta_d(Y)]$. It turns out more useful to work with the corresponding Laplace operator $L(Y)=\partial_d(Y)\partial_d(Y)^*$. We consider its kernel $Z$ which coincides with the left kernel of $\partial_d(Y)$. Let $P_Z:\mR^{Y_{d-1}}\to Z$ be the orthogonal projection to the space $Z$. By linear algebra,
\[
\dim Z = \sum_{\tau\in Y_{d-1}} \| P_Z(e_\tau) \|^2,
\]
where $e_\tau$ is the unit vector of $\tau$.

The {\em spectral theorem} from functional analysis offers a new perspective of $\| P_Z(e_\tau) \|^2$. Associated with every self-adjoint operator $L$ on a Hilbert space $\mathcal H$, and a vector $\psi\in\mathcal H$ is the {\em spectral measure of $L$ with respect to $\psi$}. It is a real measure denoted $\pi_{L,\psi}$ which satisfies
\[
\inpr{F(L)\psi,\psi}=\int_\mR F(x) d\pi_{L,\psi}(x),
\]
for every measurable function $F:\mR\to\C$. The operator $F(L)$ is uniquely defined by extending the action of polynomials on the operator $L$.

If $\mathcal H$ is finite-dimensional, $\pi_{L,\psi}$ is a discrete measure supported on the spectrum of $L$ and $\pi_{L,\psi}(\lambda)=\|P_{\lambda}\psi\|^2$, where $P_\lambda$ is the orthogonal projection to the $\lambda$-eigenspace.

We use this theorem with the measure $\pi_{L(Y),e_\tau}$. Here $Y$ is a $d$-complex and $\mathcal H = \ell^2(Y_{d-1})$. The self adjoint operator is the Laplacian $L(Y)$, and $e_\tau$ is the characteristic vector of some $(d-1)$-face of $Y$.

In particular, with $Y=\LMc$ and $Z$ as before, $\| P_Z(e_\tau) \|^2$ is simply the measure of the atom $\{0\}$ according to the spectral measure $\pi_{L(Y),e_\tau}$.

The difficulty with applying the spectral theorem to the Poisson $d$-tree is that the degrees in this tree may be unbounded. We must, therefore, consider the subtleties of the theory of unbounded operators ~\cite{book_unbounded}. Briefly, the Laplacian $L(T)$ of an infinite $d$-tree $T$ is a symmetric operator, directly defined on the dense subset of finitely supported vectors of $\mathcal H=\ell^2(T_{d-1}).$ The symmetric densely-defined operator $L(T)$ has a unique extension to $\mathcal H$. This extension need not be self-adjoint, and when it does we say that the tree $T$ is self-adjoint. In such cases the spectral theorem can be applied on $L(T)$. It can be shown that a Poisson $d$-tree is, almost surely, self-adjoint.

We employ the useful property that spectral measures are continuous with respect to local weak convergence. Since $Y=\LMc$ converges in local weak convergence to the Poisson $d$-tree $T=T_d(c)$, which is almost-surely self-adjoint, we conclude that the expected measure $\E_Y[\pi_{L(Y),e_\tau}]$ weakly converges to the expected measure $\E_T[\pi_{L(T),e_o}]$, where $o$ is the root of $T$. In particular, by measuring the closed set $\{0\}$,
\[
\limsup_{n\to\infty}\E\left[\|P_Z(e_\tau)\|^2\right] \le \E[x_T],
\]
where $x_T:=\pi_{L(T),e_o}(\{0\})$.
Consequently,
\[
\E[\dim Z] \le (1+o_n(1)){n\choose d} \E[x_T].
\]
By the Rank-Nullity Theorem from linear algebra,
\[
\dim Z - \beta_d(Y) = f_{d-1}(Y) - f_d(Y),
\]
and we conclude that
\[
\frac{1}{{n\choose d}}\E[\beta_d(Y)]\le\E[x_T] - 1 + \frac{c}{d+1} + o_n(1).
\]

There remains the problem of bounding the expectation $\E_{T}\left[x_T\right]$ without directly computing the operator's kernel. This difficulty is bypassed using the recursive structure of $d$-trees to derive a simple recursion formulas on these spectral measures, as in the following lemma.

Let $T$ be a self-adjoint $d$-tree with root $o$, and let $\sigma_1,...,\sigma_m$ be the $d$-faces that contain the root.  For $1\le j\le m$ and $1 \le r \le d$ we denote by $\tau_{j,r}$ the $(d-1)$-faces of $\sigma_j$ other than the root. Also, $T_{j,r}$ denotes the $d$-tree rooted at $\tau_{j,r}$ that contains $\tau_{j,r}$ and its branch.
\begin{lemma}
\label{lem:recTree}
 $x_T=0$ if there exists some $1\le j \le m$ such that $x_{T_{j,1}}=...=x_{T_{j,d}}=0$. Otherwise,
\[
x_T = \left(1+\sum_{j=1}^{m}\left(\sum_{r=1}^{d}x_{T_{j,r}} \right)^{-1} \right)^{-1}.
\]
\end{lemma}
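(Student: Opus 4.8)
The plan is to derive the recursion directly from the structure of the Laplacian $L(T)$ and a Schur-complement / resolvent identity, exploiting the fact that the spectral measure $\pi_{L(T),e_o}$ can be read off from the boundary values of the Stieltjes transform $G_o(z):=\inpr{(L(T)-z)^{-1}e_o,e_o}$. Concretely, I would work with $z=-\lambda$ for small $\lambda>0$ (so that $L(T)-z$ is invertible and positive), write $R(z)=(L(T)-z)^{-1}$, and use that $x_T=\pi_{L(T),e_o}(\{0\})=\lim_{\lambda\downarrow 0}\lambda\, G_o(-\lambda)$ by the standard Stieltjes-inversion formula for an atom at $0$. The recursive structure of the $d$-tree lets me expand $G_o$ in terms of the analogous quantities $G_{\tau_{j,r}}$ for the subtrees $T_{j,r}$, which rooted at $\tau_{j,r}$ are themselves self-adjoint $d$-trees, and then pass to the $\lambda\downarrow 0$ limit.

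The key computational step is a Schur-complement identity. Writing $L(T)$ in block form with respect to the decomposition $\ell^2(T_{d-1}) = \mathbb{C} e_o \oplus (\text{rest})$, and recalling that $L(Y)=\partial_d\partial_d^*$ acts on $(d-1)$-faces with $\inpr{L e_\tau, e_\tau} = d_Y(\tau)$ and off-diagonal entries $\pm 1$ exactly when two $(d-1)$-faces lie in a common $d$-face, I would observe that the root $o$ is coupled only to the faces $\tau_{j,r}$, and each $d$-face $\sigma_j$ contributes a rank-one-type block linking $o$ to $\tau_{j,1},\dots,\tau_{j,d}$. Deleting $o$ splits the remaining complex into the disjoint union of the branches $T_{j,r}$ (this is where the tree structure is essential — no face outside a single branch meets $\tau_{j,r}$). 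A resolvent/Schur-complement computation then gives, after the $\lambda\downarrow 0$ limit, that the effective "conductance" seen at $o$ through the face $\sigma_j$ is the harmonic-type combination $\big(\sum_{r=1}^d x_{T_{j,r}}\big)$, these add across $j=1,\dots,m$, and one more inversion (accounting for the diagonal term at $o$, which in the limit contributes the additive $1$) yields
\[
x_T=\Big(1+\sum_{j=1}^{m}\Big(\sum_{r=1}^{d}x_{T_{j,r}}\Big)^{-1}\Big)^{-1}.
\]
The degenerate case is when some $\sigma_j$ has all $x_{T_{j,r}}=0$: then that inner sum is $0$, its reciprocal is $+\infty$, the outer bracket is $\infty^{-1}=0$, so $x_T=0$. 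One must check that the limiting manipulations (interchanging $\lim_{\lambda\downarrow 0}$ with the finite sums, and the monotone behaviour of $\lambda G(-\lambda)$) are legitimate; here self-adjointness of $T$ and of all the $T_{j,r}$ guarantees that the spectral theorem applies and that $\lambda G_o(-\lambda)$ is monotone nonincreasing as $\lambda\downarrow 0$, so the limit exists and the algebra is valid.

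I expect the main obstacle to be the careful treatment of the $\lambda\downarrow 0$ limit in the presence of possibly unbounded vertex degrees and infinitely many branches — i.e. justifying that the finite-$\lambda$ Schur-complement identity, which is an exact algebraic identity, survives the limit and that no mass escapes to infinity or concentrates anomalously. The cleanest route is probably to first establish the identity for the finite truncations $T_{d,k}(c)$ (where everything is finite-dimensional and the Schur complement is elementary linear algebra), then invoke local weak convergence together with the a.s. self-adjointness of the Poisson $d$-tree and the weak continuity of spectral measures (both already invoked in this section) to pass to the limit; monotonicity in $k$ of the relevant atoms would give the needed control. The purely combinatorial input — that removing the root disconnects $T$ into the subtrees $T_{j,r}$ — is immediate from the definition of a $d$-tree and is what makes the recursion close.
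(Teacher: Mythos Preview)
Your overall strategy --- access $x_T$ as $\lim_{\lambda\downarrow 0}\lambda\,G_o(-\lambda)$ for the resolvent and derive a recursion in the Green's function before passing to the limit --- is essentially the paper's approach. The paper regularizes along the imaginary axis via $H_s(x)=is/(x+is)$ and uses the Second Resolvent Identity with the operator decomposition $L=M+\tl L$, where $M$ is the Laplacian of the star of $o$ (the faces $\sigma_1,\dots,\sigma_m$) and $\tl L=\bigoplus_{j,r}L(T_{j,r})$; this yields an exact identity $h_T(s)\bigl(1+\sum_j(is+\sum_r h_{T_{j,r}}(s))^{-1}\bigr)=1$ at every $s\ne 0$, and one then lets $s\to 0$. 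Your real-axis limit and Schur-complement language are minor variations on the same theme.

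There is, however, a concrete gap in your decomposition. You write that ``deleting $o$ splits the remaining complex into the disjoint union of the branches $T_{j,r}$'' and that ``no face outside a single branch meets $\tau_{j,r}$''. This is false at the operator level: for fixed $j$ the faces $\tau_{j,1},\dots,\tau_{j,d}$ are all $(d-1)$-faces of the same $d$-face $\sigma_j$, so $L(T)$ has off-diagonal entries $\pm 1$ between every pair $\tau_{j,r},\tau_{j,r'}$. Consequently the Schur-complement block $D$ (the restriction of $L-z$ to $e_o^{\perp}$) is block-diagonal over $j$ but \emph{not} over $(j,r)$: for each $j$ one has $D_j=\bigoplus_r\bigl(L(T_{j,r})-z\bigr)+w_jw_j^{*}$, where $w_j$ is the restriction of $\partial_{\sigma_j}$ to $\{\tau_{j,r}\}_r$, a rank-one perturbation. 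This extra coupling is precisely what produces the specific shape of the recursion (the inner sum $\sum_r x_{T_{j,r}}$ and then the reciprocal). It can be handled by a Sherman--Morrison step: setting $\alpha_j(z)=\sum_r G_{\tau_{j,r}}(z)$ one gets $b_j^{*}D_j^{-1}b_j=\alpha_j/(1+\alpha_j)$, and then $G_o(z)^{-1}=-z+\sum_j(1+\alpha_j(z))^{-1}$, which after multiplying by $\lambda$ and sending $\lambda\downarrow 0$ gives exactly the claimed formula. So your route does work once this is inserted, but as written the key algebraic step is missing and the justification you give for it is incorrect.

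A smaller point: your proposed fallback via finite truncations and local weak convergence is unnecessary and somewhat misplaced here. The lemma is a deterministic statement about a fixed self-adjoint tree, and the resolvent identity holds exactly on the infinite tree for every $\lambda>0$ (or $s\ne 0$); the only limit needed is $\lambda\downarrow 0$, which is controlled by monotonicity/dominated convergence as you note. The paper's decomposition $L=M+\tl L$ makes all of this cleaner because $\tl L$ already \emph{is} the direct sum $\bigoplus_{j,r}L(T_{j,r})$, so the subtrees decouple from the outset and no rank-one correction is needed.
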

\begin{proof}[Proof sketch.] 
Consider the following  bounded family of measurable functions $\{H_s:\mR\to\C~|~s\in\mR\setminus\{0\}\},$
\[
H_s(x) = \frac{is}{x+is}.
\]
Note that $H_s$ approaches the Kronecker delta function $\delta_{x,0}$ as $s\to 0$. Given a self-adjoint $d$-tree $T$ with root $o$, we define
$$h_T(s):=\int_\mR H_s(x)d\pi_{L(T),e_o}(x).
$$
In particular, $x_T = \lim_{s\to 0}h_T(s)$.
The proof is concluded by showing that the functions $h_{T_{j,r}}$'s and $h_T$ satisfy the formula
\begin{equation}
\label{eqn:h_formulas}
h_T(s)\left( 1+\sum_{j=1}^{m}\left(is+\sum_{r=1}^{d}h_{T_{j,r}}(s)\right)^{-1} \right) = 1,
\end{equation}
and letting $s\to 0$.

We turn to describe the derivation of equation (\ref{eqn:h_formulas}). Denote $L:=L(T)$, $\tl{L}:=\bigoplus_{j,r}L(T_{j,r})$ and $M$ the Laplacian of the subcomplex of $T$ which contains $\sigma_1,...,\sigma_m$ and their subfaces. In particular, $L=M\oplus\tl L$. The operators $H_s(L)$ and    $H_s(\tl L)$ are scalar multiples of the {\em resolvents} $R:=(L+	is\cdot I)^{-1}$ and $\tl R:=(\tl L+	is\cdot I)^{-1}$. In particular, $h_T(s) = is\cdot\inpr{Re_0,e_0}$. Simple observations about the $d$-tree structure yields that (i) $\tl R e_o = \frac {1}{is} e_o$ and (ii) $\inpr{\tl R e_{\tau_{j,r}},e_{\tau_{j',r'}}}=0$ when $(j,r)\ne (j',r').$ In addition, we use the Second Resolvent Identity which states that $RM\tl R = \tl R - R$. Here the operator $M$ has a very concrete and usable form, being the Laplacian of a $d$-complex which consists of $m$ distinct $d$-faces with a common $(d-1)$-subface $o$. Equation (\ref{eqn:h_formulas}) is obtained using simple algebraic manipulations by comparing terms of the form $\inpr{(RM\tl R)e_\tau,e_{\tau'} }= \inpr{(\tl R - R)e_\tau,e_{\tau'}}$, when $\tau,\tau'$ are $(d-1)$-faces of $T$ of distance at most $1$ from the root $o$.
\end{proof}

The remaining step of the argument is an application of the recursive formula in Lemma \ref{lem:recTree} to the Poisson $d$-tree.
\begin{lemma}
\label{lem::recPoiTree}
Let $T=T_d(c)$ be a rooted Poisson $d$-tree with parameter $c$. Then,
\[
\E[x_T ]\le \max\left\{t+ct(1-t)^d-\frac{c}{d+1}\left(1-(1-t)^{d+1}\right)~\mid~ t\in\I,~t=e^{-c(1-t)^d}\right\}
\]
\end{lemma}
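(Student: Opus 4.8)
The plan is to set up a distributional fixed-point equation for the random variable $x_T$ and then optimize. First I would observe that, by Lemma \ref{lem:recTree} applied to $T=T_d(c)$, the root has $m\sim\mathrm{Poi}(c)$ descendant $d$-faces $\sigma_1,\dots,\sigma_m$, and the $d$-trees $T_{j,r}$ hanging off the $d(m)$ non-root $(d-1)$-faces are i.i.d.\ copies of $T_d(c)$ itself, independent of $m$. Hence $x_T$ satisfies the recursive distributional identity: $x_T=0$ if for some $j$ all $d$ of the values $x_{T_{j,1}},\dots,x_{T_{j,d}}$ vanish, and otherwise
\[
x_T=\left(1+\sum_{j=1}^m\Big(\sum_{r=1}^d x_{T_{j,r}}\Big)^{-1}\right)^{-1}.
\]
Let $t:=\Pr[x_T=0]$. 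Since the $d$ subtrees attached to a fixed $\sigma_j$ are independent, the probability that all of them have $x=0$ is $t^d$, so conditioned on $m$, the number of ``killed'' $d$-faces $\sigma_j$ is $\mathrm{Bin}(m,t^d)$; thinning the Poisson, the number of killed faces is $\mathrm{Poi}(ct^d)$. Thus $\Pr[x_T\ne 0\mid\text{no killed face}]$ forces $t=\Pr[\text{no }j\text{ is killed}]=e^{-ct^d}$ — but wait, this would give $x_T\ne 0$ always on that event; the correct statement, matching Lemma \ref{lem:delta_k_distr} and the definition of $t=t(c,d)$, is that $t=e^{-c(1-t)^d}$ is exactly the collapse probability, and I would justify the identification $\Pr[x_T=0]=t$ by a monotone-limit argument: $x_T=0$ corresponds (after letting $s\to0$ in $h_T$) to the root-subtree being ``collapsible from the root,'' which by Lemma \ref{lem:delta_k_distr} has probability $t$. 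The $\max$ over all roots $t$ of $t=e^{-c(1-t)^d}$ appears in the statement precisely because the fixed-point equation for $\Pr[x_T=0]$ may have several solutions, and the argument only pins down that this probability is \emph{some} such root.

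Next I would compute $\E[x_T]$ by taking expectations in the recursion, using the explicit form of the resolvent-type expression. Write $B_j:=\sum_{r=1}^d x_{T_{j,r}}$; on the event that no $\sigma_j$ is killed we have $x_T=\big(1+\sum_{j=1}^m B_j^{-1}\big)^{-1}$ with all $B_j>0$. The key algebraic fact I would exploit is the ``resolvent'' telescoping identity behind equation (\ref{eqn:h_formulas}): writing $x_T=\big(1+\sum_j B_j^{-1}\big)^{-1}$, one has the exact decomposition
\[
1-(d+1)\,x_T=\sum_{j=1}^m\frac{B_j^{-1}}{1+\sum_{j'}B_{j'}^{-1}}\Big(\text{something}\Big)\ -\ \dots,
\]
but rather than chase that, the cleaner route is: for a \emph{fixed} $(d-1)$-face one has by Rank–Nullity on the tree-truncations that $\E[x_T]$ equals the limiting normalized nullity, and the contribution of each $d$-face $\sigma$ to the ``deficiency'' $1-\tfrac{c}{d+1}$ versus $\E[x_T]$ can be read off locally. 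Concretely, I expect the computation to reduce to: $\E[x_T]=t+\E\big[\mathbf 1_{x_T\ne0}\,x_T\big]$, and on $\{x_T\ne0\}$ a Cauchy–Schwarz / Jensen estimate applied to $\big(1+\sum_j B_j^{-1}\big)^{-1}\le \big(1+m\,(\text{something})\big)^{-1}$ together with the convexity of $u\mapsto u^{-1}$ yields the bound $\E[x_T]\le t+ct(1-t)^d-\tfrac{c}{d+1}\big(1-(1-t)^{d+1}\big)$ for the relevant root $t$. The three terms have transparent meaning: $t$ is the atom at a ``dead'' root; $ct(1-t)^d$ counts, in expectation, $d$-faces whose root-side is alive but which are individually killable (contributing full weight); and $-\tfrac{c}{d+1}(1-(1-t)^{d+1})$ is the negative correction from $d$-faces all of whose leaves survive, which pull $x_T$ strictly below what the naive count gives. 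I would carry out this estimate by conditioning on $m$ and on which subtrees are alive, then bounding the conditional expectation of $\big(1+\sum_j B_j^{-1}\big)^{-1}$ from above by replacing each alive $B_j^{-1}$ by its conditional mean via Jensen (since $u\mapsto(1+u)^{-1}$ is convex in the aggregate variable), and finally summing the resulting Poisson series in closed form.

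The main obstacle, I expect, is the passage $s\to0$ in $h_T(s)$ and the justification that $\Pr[x_T=0]$ is a fixed point of $t=e^{-c(1-t)^d}$ rather than something smaller — in particular controlling the possibility that the unbounded-operator kernel is ``larger than it should be'' on a positive-probability event. This is exactly where self-adjointness of $T_d(c)$ (quoted above) and the continuity of spectral measures under local weak convergence must be used carefully: one needs that the truncations $T_{d,k}(c)$, which are finite and hence trivially self-adjoint, have spectral atoms at $0$ converging to $x_T$, so that $x_T$ inherits the monotone structure and one may identify $\{x_T=0\}$ with an event governed by the $\delta_k$ recursion of Lemma \ref{lem:delta_k_distr}. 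A secondary technical point is ensuring the Jensen/convexity step is applied to genuinely independent blocks and that the conditional expectation $\E[B_j^{-1}\mid B_j>0]$ is finite — this should follow from the recursion itself showing $x_{T'}\ge$ a positive constant on its support whenever $x_{T'}\ne0$, because the formula $\big(1+\sum(\cdot)^{-1}\big)^{-1}$ is bounded below once at least one summand is finite. Once these regularity issues are settled, the remainder is the bookkeeping Poisson sum sketched above.
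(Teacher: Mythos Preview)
Your setup via the recursive distributional equation is exactly right, but the two substantive steps both break down.

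First, the identification of $t$. You set $t:=\Pr[x_T=0]$ and then try to argue that this equals the collapse probability, i.e.\ a root of $t=e^{-c(1-t)^d}$. This is backwards on two counts. From Lemma~\ref{lem:recTree}, $x_T=0$ iff \emph{some} child $\sigma_j$ has all its $d$ subtrees with $x=0$; in particular, if $m=0$ (the bare root, the most collapsible tree of all) then $x_T=1$, not $0$. So the event $\{x_T=0\}$ is \emph{not} ``the root is collapsible,'' and your proposed monotone-limit identification fails. What does satisfy the fixed-point equation is $t:=\Pr[x_T>0]$: since $S_j>0$ iff not all $x_{T_{j,r}}$ vanish, one gets $\Pr[S_j>0]=1-(1-t)^d$ and hence $t=\E[(1-(1-t)^d)^m]=e^{-c(1-t)^d}$ directly from the recursion, with no appeal to collapse. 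The $\max$ over roots in the lemma statement is there precisely because this argument only pins down $\Pr[x_T>0]$ as \emph{some} solution, not a specific one; it is not about $\Pr[x_T=0]$ at all.

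Second, and more seriously, your proposed route to the bound on $\E[x_T]$ via Jensen cannot work. You want to bound $\E\big[(1+\sum_j B_j^{-1})^{-1}\big]$ from above by replacing the random sum by its mean, ``since $u\mapsto(1+u)^{-1}$ is convex.'' But convexity gives the opposite inequality: Jensen yields $\E[(1+U)^{-1}]\ge(1+\E U)^{-1}$, a \emph{lower} bound. No rearrangement of this kind will produce the required upper bound, and your fallback claim that $x_{T'}$ is bounded below by a positive constant on its support is also false (iterating the recursion shows $x_T$ can be arbitrarily small and positive). The paper does not use any convexity here: it computes $\E[x_T]$ \emph{exactly} as a function of $t=\Pr[x_T>0]$. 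Starting from $x_T\cdot(1+\sum_j S_j^{-1})=\mathbf 1_{\{\forall j,\,S_j>0\}}$ one writes $\E[x_T]=t-\E\big[x_T\sum_j S_j^{-1}\big]$ and then evaluates the second expectation by exploiting the exchangeability of the $S_j$'s together with the Poisson size-bias identity $\E_m[m\,f(m)]=c\,\E_m[f(m+1)]$; a short computation (in the style of Bordenave--Lelarge--Salez) then collapses this to the closed form $t+ct(1-t)^d-\tfrac{c}{d+1}(1-(1-t)^{d+1})$. The inequality and the $\max$ in the lemma come only from not knowing which fixed point $t$ equals $\Pr[x_T>0]$, not from any estimate in the expectation itself.
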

Note that this maximum is taken over a finite set, due to the condition $t=e^{-c(1-t)^d}$.
\begin{proof}
Let $T$ be a Poisson $d$-tree with root degree $m$ and $\{T_{j,r}~|~1\le j\le m,1\le r\le d\}$ its subtrees as above. The parameters $x_T,\{x_{T_{j,r}}\}$ can be considered as random variables when $T$ is $T_d(c)$-distributed. The random variables $\{x_{T_{j,r}}\}$ are i.i.d and are distributed like $x_T$ since all the subtrees $T_{j,r}$ are independent Poisson $d$-trees. In addition, these variables satisfy the equation of Lemma \ref{lem:recTree}.

These observations suggest the following equivalent description of $\mathcal D$, the distribution of the random variable $x_T$. First sample a $Poi(c)$-distributed integer $m$, and $x_{T_{j,r}}\sim \mathcal D$ i.i.d\ for every $1\le j\le m$ and $1\le r\le d$. Given these samples, the value of $x_T$ is determined by Lemma \ref{lem:recTree}.

In particular, if we let $t:=\Pr(x_T>0)$, then $t$ satisfies the equation
\begin{equation}\label{eqn:p_eqn}
t = \sum_{m=0}^{\infty}\frac{e^{-c}c^m}{m!}(1-(1-t)^d)^m=e^{-c(1-t)^d}.
\end{equation}
Let $X$ be a $\mathcal D$-distributed random variable,
\[
\E[X]~=~\E\left[\frac{\textbf{1}_{\{\forall j\in [m],\;S_j>0\}}}{ 1+  \sum_{j=1}^{m}S_j^{-1} }\right]
\]
Here $S_1,S_2,\ldots,S_m$ are random variables whose distribution is that of a sum of $d$ i.i.d.\ $\mathcal D$-distributed variables. By expressing the probability $\Pr[\textbf{1}_{\{\forall j\in [m],\;S_j>0\}}]$ as $t$, exploiting the symmetry between the different $S_j$'s and using basic properties of the Poisson distribution, we are able to express this expectation in terms of $t$, 
\[
\E[X] = t+ct(1-t)^d-\frac{c}{d+1}\left(1-(1-t)^{d+1}\right),
\]
as was claimed.
\end{proof}

It requires only basic calculus to conclude that:
\begin{enumerate}
\item For $c<\cacy$, the maximum of $$t + ct(1-t)^d-\frac{c}{d+1}\left(1-(1-t)^{d+1}\right),~~\mbox{s.t.}~~~t=e^{-c(1-t)^d}$$ is attained at $t=1$. Consequently, $\E[\beta_d(Y)]\le o(n^d).$
\item For $c>\cacy$, the maximum is attained at $t=t(c,d)$. Consequently,
\begin{equation}
\E[\beta_d(Y)]\le (1+o_n(1)){n\choose d}\left( \frac{c(1-t)^{d+1}}{d+1} - 1+t+ct(1-t)^d \right).
\label{eqn:beta_upper_bound}
\end{equation}

\end{enumerate}

The proof of Theorem \ref{thm:main}~(II.b) is concluded by the following standard probabilistic argument. Let $c<\cacy$ and $\varepsilon > 0$. The absence of small non-collapsible subcomplexes in $Y$ (Lemma \ref{lem:noSmallCores}) implies that it has no small $d$-cycles except $\partial\Delta_{d+1}$'s. The computation above shows that the dimension of the cycle space is $o(n^d)$. Therefore, removing $\mbox{Bin}\left({n\choose d+1},\varepsilon/n\right)=\Theta(n^d)$ random $d$-faces eliminates all large (non $\partial\Delta_{d+1}$) $d$-cycles with high probability, hence $\LMvarc{c-\varepsilon}$ is a.a.s.\ $d$-acyclic except $\partial\Delta_{d+1}$'s.

\subsection{The cyclic regime - Theorem \ref{thm:main}(III)} 

The key idea of ~\cite{acyc} for the computation of a matching upper bound for the $d$-acyclicity threshold uses an analysis of the collapse process. Let $Y$ be a $d$-complex and $k$ some positive integer. Recall that $R_k(Y)$ is the simplicial complex obtained from $Y$ by $k$ phases of $d$-collapse and $S_k(Y)$ is its subcomplex obtained by removing the exposed $(d-1)$-faces. Clearly, $\beta_d(Y)=\beta_d(S_k(Y))$ since $d$-collapsing and removing exposed faces does not affect the right kernel of $\partial_d$. The final ingredient of the strategy is the observation that $\beta_d(S_k(Y)) \ge f_d(S_k(Y)) - f_{d-1}(S_k(Y)),$ and the fact that the parameter $f_d(S_k(Y)) - f_{d-1}(S_k(Y))$ can be studied from the local weak limit.

Let $Y=\LMc$ where $c>\cacy$, and $Y':=S_k(Y)$ for a sufficiently large integer $k$.

\begin{align}
\E[\beta_d(Y)]~\ge~&\E[f_d(Y')-f_{d-1}(Y')]\nonumber\\
=~&\E\left[ \sum_{\tau \in {Y'}_{d-1}}\left(\frac{d_{Y'}(\tau)}{d+1}-1\right) \right]\nonumber\\
=~&{n\choose d} \Pr[\tau\in {Y'}]\left(\frac{\E\left[ {d_{Y'}(\tau)|\tau\in {Y'}}\right]}{d+1}-1\right)\label{eqn:beta_lower_bound}.
\end{align}

For the last equation we can, due to symmetry, consider a fixed $\tau$ and apply the Law of Total Expectation to the event $\{\tau\in {Y'}\}$.
As mentioned above, the $(d-1)$-face $\tau$ of $Y$ belongs to $Y'$ if and only if in the $\tau$-rooted collapse process of $Y$, the degree of $\tau$ is greater than $1$ after $(k-1)$ phases and positive after $k$ steps. By approximating the $\tau$-rooted collapse process of $Y$ with the rooted collapse process on $T_d(c)$ we obtain that upto $1+o_n(1)$ factor,
\begin{equation}
\label{eqn:tau_in_Y}
\Pr[\tau\in Y'] \ge \Pr[\delta_k>1]=1-t_k- c(1-t_{k-2})^dt_{k-1}.
\end{equation}
Furthermore, upto $1+o_n(1)$ factor,
\begin{align}
\E\left[ {d_{Y'}(\tau)|\tau\in {Y'}}\right]~=~&\E[\delta_k~|~\delta_k>0~\wedge~ \delta_{k-1}>1]\nonumber\\
\ge~&\sum_{j=2}^{\infty} j\cdot \Pr[\delta_k=j~|~\delta_k>0~\wedge~ \delta_{k-1}>1]\nonumber\\
\ge~&\sum_{j=2}^{\infty} j\cdot \frac{\Pr[\delta_k=j]}{\Pr[\delta_{k-1}>1]}\nonumber\\
=~&\frac{c(1-t_{k-1})^{d}(1-t_k)}{ 1-t_{k-1}- c(1-t_{k-2})^dt_{k-1}}.\label{eqn:expected_degree}
\end{align}
By combining Inequalities (\ref{eqn:beta_lower_bound}),(\ref{eqn:tau_in_Y}) and (\ref{eqn:expected_degree}), and letting $k\to\infty$, we obtain that
\[
\E[\beta_d(Y)]\ge (1+o_n(1)){n\choose d}\left( \frac{c(1-t)^{d+1}}{d+1} - 1+t+ct(1-t)^d \right).
\]
This bound starts to be meaningful for $c>\cacy$, where this expression matches the upper bound from (\ref{eqn:beta_upper_bound}). Since $\beta_d$ is 1-Lifschitz, a straightforward application of Azuma's inequality yields that a.a.s.\ $\beta_d$ deviates from its expectation by only $o(n^d)$. In particular, this shows a matching upper bound for the threshold of $d$-acyclicity.

\subsection{$\mR$-Shadow of $\LMc$}
The behavior of the $\mR$-shadow when $c<\cacy$ is studied similarly to the C-shadow in the collapsible regime (See Section \ref{subsec:C-shad}). 

We turn to the range $c>\cacy$. Here we do not give a proof, but only a general intuitive explanation. An accurate analysis of the measure concentration can be found in \cite{LP}.
Recall that $$\frac{1}{{n\choose d}} \E[\beta_d(Y;\mR)]
\xrightarrow[n\to\infty]{}g_d(c):=\frac{c}{d+1}(1-t)^{d+1}-(1-t)+ct(1-t)^d.$$
A simple technical claim shows that for every $c>\cacy$, the limit function $g_d(c)$ is differentiable w.r.t the variable $c$ and its derivative equals to $\frac{1}{d+1}(1-t)^{d+1}$.

It turns out to be more convenient to work here with a $d$-dimensional analog of the so-called {\em evolution of random graphs}. Let $Y_d(n,m)$ be a random simplicial complex with $n$ vertices, a complete $(d-1)$-skeleton and $m$ uniformly random $d$-faces. $Y'=Y_d(n,m+1)$ can be sampled by the following procedure. First sample $Y=Y_d(n,m)$ and then add a random $d$-face which does not belong to $Y$. Therefore, the following equation holds in expectation,
\[
\beta_d(Y';\mR) - \beta_d(Y;\mR) = \frac{1}{{n\choose d+1}}\left|\SH_\mR(Y)\right|.
\]
Letting $m\sim\mbox{Bin}\left({n \choose {d+1}},\frac cn\right)$ yields
\[
{n\choose d}\left(g_d\left(c+\frac{d+1}{{n\choose d}}\right) - g_d(c)\right) \approx \frac{1}{{n\choose d+1}}\left|\SH_\mR\left(Y\right)\right|,
\]
and by letting $n\to\infty$,
\[
\frac{1}{{n\choose d+1}}\left|\SH_\mR\left(Y\right)\right|\approx (d+1)g'_d(c) = (1-t)^{d+1}.
\]
This argument can be made rigorous by incrementing $Y$ with $\varepsilon n^d$ random $d$-faces at a time rather than one by one, and applying standard measure concentration inequalities.

\section{Concluding remarks}
Although this article is mostly a review of previous work, it does contain several new results, e.g., Theorem \ref{thm:main}(I) that deals with the collapsible regime is a little stronger than the original result of \cite{ALLM}. Other notable new results concern the asymptotic densities of the core and the C-shadow of $\LMc$ for $c>\ccol$, improving the main theorem of \cite{col2} which says that $\LMc$ is a.a.s.\ non-collapsible for $c>\ccol$. As mentioned above, although the main result in that paper is correct, there is an error in the proof, which we are able to remedy here using the techniques of Riordan~\cite{rio}.

The results surveyed here can be viewed from several perspectives which suggest different problems for future research.

From the combinatorial perspective, the phase transition in the density of the shadow of $\LMc$ is of great interest. We conjecture that the $\mR$-shadow grows from linear in $n$ to a giant (order $\Theta(n^{d+1})$) in a single step in a random evolution of simplicial complexes. This starkly contrasts with the gradual growth of the giant component in random graphs. It is of particular interest to understand the structure of the critical complex. Numerical experiments suggest that its $(d-1)$-homolgy group has torsion of size $\exp(\Theta(n^d))$, but we do not know a proof of this yet.

On the topological side, it would be very interesting to better understand the $d$-cycles which appear in $\LMc$ when $c>\cacy$. Provably, they consist of $\Theta(n^d)$ $d$-faces, but numerical experiments suggest that they lie in an unknown territory in the realm of homological $d$-cycles. Unlike closed manifolds, in which the degree of all $(d-1)$-faces equals to $2$, it seems that in these $d$-cycles the average degree approaches $d+1$, which is the largest possible for a minimal $d$-cycle. Namely, these $d$-cycles are in some sense the opposite of manifolds.

In addition, the random complexes $\LMc$, where $\ccol<c<\cacy$ have the nice property of being $d$-acyclic but not $d$-collapsible. What other interesting topological or combinatorial properties do they have?

There is much more to study about random simplicial complexes in the regime $p=c/n$. In particular, the question regarding the vanishing of the top homology over finite fields is still open and presently out of reach. It is interesting to resolve whether homology thresholds over other fields can also be read off some parameter of the Poisson $d$-tree.

\label{sec:open}

\end{document}